\begin{document}

\newtheorem{theorem}{Theorem}
\newtheorem{proposition}[theorem]{Proposition}
\newtheorem{lemma}[theorem]{Lemma}
\newtheorem{definition}{Definition}

\newtheorem*{theoremA}{A Banach--Caccioppoli-type Theorem}

\newcommand{\keywords}[1]{\par\addvspace\baselineskip\noindent\textbf{Keywords:}\enspace\ignorespaces#1}

\newcommand{\AMSclassification}[1]{\par\addvspace\baselineskip\noindent\textbf{MSC:}\enspace\ignorespaces#1}

\title{The effective potential and transshipment in thermodynamic formalism at temperature zero}
\author{
Eduardo Garibaldi\thanks{Supported by PROCAD UNICAMP-UFRGS 162-2007.}\\
\small{Departamento de Matem\'atica}\\
\small{Universidade Estadual de Campinas}\\
\small{13083-859 Campinas - SP, Brasil}\\
\small{\texttt{garibaldi@ime.unicamp.br}}
\and
Artur O. Lopes\thanks{Partially supported by CNPq, PRONEX -- Sistemas Din\^amicos, INCT em Matem\'atica, and beneficiary of CAPES financial support (PROCAD UFRGS-IMPA and PROCAD UNICAMP-UFRGS).}\\
\small{Instituto de Matem\'atica}\\
\small{Universidade Federal do Rio Grande do Sul}\\
\small{91509-900 Porto Alegre - RS, Brasil}\\
\small{\texttt{arturoscar.lopes@gmail.com}} }
\date{\today}
\maketitle

\vspace{-.2cm}

\begin{abstract}
Denote the points in $\{1,2,..,r\}^\mathbb{Z}= \{1,2,..,r\}^\mathbb{N}\times
\{1,2,..,r\}^\mathbb{N}$ by $(\mathbf{y}^*, \mathbf{x})$. Given a Lipschitz
continuous observable $A: \{1,2,..,r\}^\mathbb{Z} \to \mathbb{R} $, we define
the map $\mathcal{G}^+: \mathcal{H }\to \mathcal{H }$ by
\begin{equation*}
\mathcal{G}^+ (\phi)(\mathbf{y}^*) = \sup_{\mu \in \mathcal{M}_\sigma} \left[
\int_{\{1,2,..,r\}^\mathbb{N}} \left( A(\mathbf{y}^*, \mathbf{x}) + \phi(%
\mathbf{x})\right) \; d\mu(\mathbf{x}) + h_\mu(\sigma) \right],
\end{equation*}
where:

\noindent -- $\sigma$ is the left shift map acting on $\{1,2,..,r\}^\mathbb{N%
}$;

\noindent -- $\mathcal{M}_\sigma$ denotes the set of $\sigma$-invariant
Borel probabilities;

\noindent -- $h_\mu(\sigma) $ indicates the Kolmogorov-Sinai entropy;

\noindent -- $\mathcal{H }$ is the Banach space of Lipschitz real-valued
functions on $\{1,2,..,r\}^\mathbb{N} $.

We show there exist a unique $\phi^+ \in \mathcal{H }$ and a unique $%
\lambda^+\in \mathbb{R}$ such that
\begin{equation*}
\mathcal{G}^+ (\phi^+) = \phi^+ + \lambda^+.
\end{equation*}
We say that $\phi^+ $ is the effective potential associated to $A$. This
also defines a family of $\sigma$-invariant Borel probabilities $\mu_{%
\mathbf{y}^*}$ on $\{1,2,..,r\}^\mathbb{N}$, indexed by the points $\mathbf{y%
}^* \in \{1,2,..,r\}^\mathbb{N}.$
Finally, for $A$ fixed and for variable positive real values $\beta$, we
consider the same problem for the Lipschitz observable $\beta A$. We
investigate then the asymptotic limit when $\beta\to \infty $ of the
effective potential (which depends now on $\beta$) as well as the above
family of probabilities. We relate the limit objects with
an ergodic version of Kantorovich transshipment problem. In statistical
mechanics $\beta \propto 1/T$, where $T$ is the absolute temperature. In
this way, we are also analyzing the problem related to the effective
potential at temperature zero.

\vspace{-.2cm}

\keywords{thermodynamic formalism, effective potential, transshipment, Gibbs state, additive eigenvalue, maximizing probabilities.}

\vspace{-.2cm}

\AMSclassification{37A05, 37A35, 37A60, 37D35, 49K27, 49Q20, 82B05, 90C05, 90C50.}
\end{abstract}

\newpage

\begin{section}{Introduction}

Our purpose is to propose, in a rigorous mathematical way, a description of
the main features of what could be called in statistical mechanics the
\emph{effective potential formalism} for long range interactions. In this way, we are able to present
a family of \emph{effective probabilities}, each one corresponding to a Gibbs state in the sense
of Ruelle's thermodynamic setting \cite{PP}. We also consider the
limit behavior of this family of probabilities when the
temperature goes to zero. In this case, we relate our analysis
with a kind of ergodic  Kantorovich transshipment problem.
We point out that in the classical transport theory \cite{Ra, Vi1, Vi2}
there is no assumptions about invariant probabilities playing a role in the problem.

Actually our framework will be more general than Bernoulli shifts.
We will develop the theory using one-sided topologically transitive
subshifts of finite type defined by symmetric transition matrices.

Hence let $ \mathbf M : \{1,\ldots, r\} \times \{1, \ldots, r\} \to \{0, 1 \} $ be an irreducible transition matrix.
One has naturally two subshifts associated to such a matrix. We can introduce the standard subshift of finite type
$$ \Sigma_{\mathbf M} = \left \{ (x_0, x_1, \ldots) \in \{1, \ldots, r\}^{\mathbb Z_+} : \mathbf M(x_j, x_{j + 1}) = 1 \right \}, $$
as well as the dual subshift of finite type
$$ \Sigma^*_{\mathbf M^T} = \left \{ (\ldots, x_1, x_0) \in \{1, \ldots, r\}^{\mathbb Z_-} : \mathbf M^T(x_j, x_{j + 1}) = 1 \right \}. $$
As topological spaces, both subshifts are always compact metrizable spaces.
We suppose henceforth that the matrix $ \mathbf M $ is symmetric. So we have a canonical homeomorphism
$ \mathbf x = (x_0, x_1, \ldots) \in \Sigma_{\mathbf M} \mapsto \mathbf x^* = (\ldots, x_1, x_0) \in \Sigma^*_{\mathbf M} $.

Given $ \Lambda \in (0, 1) $, we equip as usual $ \Sigma_{\mathbf M} $ with the metric $ d(\mathbf x, \mathbf y) = \Lambda^k $, where
$ \mathbf x = (x_0, x_1, \ldots), \mathbf y = (y_0, y_1, \ldots) \in \Sigma_{\mathbf M} $ and $ k = \min \{j: x_j \ne y_j \} $.
Hence, for $ \mathbf x^*, \mathbf y^* \in \Sigma^*_{\mathbf M} $, we just set $ d^*(\mathbf x^*, \mathbf y^*) := d(\mathbf x, \mathbf y) $.

Let $ \sigma $ be the left shift map acting on $ \Sigma_{\mathbf M} $ and let $ \sigma^* $ be the right shift map acting on
$ \Sigma^*_{\mathbf M} $, namely,
$$ \sigma(x_0, x_1, x_2, \ldots) = (x_1, x_2, \ldots) \; \text{ and } \; \sigma^*(\ldots, x_2, x_1, x_0) = (\ldots, x_2, x_1). $$
Clearly, $ * \circ \sigma = \sigma^* \circ * $. Furthermore, since $ \mathbf M $ is irreducible, notice that
the dynamics $ (\Sigma_{\mathbf M}, \sigma) $ is transitive -- and consequently the conjugated dynamical system $ (\Sigma^*_{\mathbf M}, \sigma^*) $ too.

Let $ C^0(\Sigma_{\mathbf M}) $ and $ C^0(\Sigma^*_{\mathbf M}) $ denote the spaces of continuous real-valued functions
on respectively $ \Sigma_{\mathbf M} $ and $ \Sigma^*_{\mathbf M} $, both equipped with the topology of uniform convergence.
Thus, we can obtain from the previous homeomorphism an isometry $ * : C^0(\Sigma_{\mathbf M}) \to C^0(\Sigma^*_{\mathbf M}) $
writing $ f^*(\mathbf x^*) := f(\mathbf x) $ for every function $ f \in C^0(\Sigma_{\mathbf M}) $. This fact allows us to make the
identification $ C^0(\Sigma_{\mathbf M}) \simeq C^0(\Sigma^*_{\mathbf M}) $.

The same isometric property is verified for either H\"older or Lipschitz continuous real-valued functions.
Since one can simply incorporate the H\"older exponent into the distance, we remark that to work with the Lipschitz class does not lead to
loss of generality. Therefore, $ \mathcal H $ will denote in this article the Banach space of Lipschitz continuous real-valued functions
on either $ \Sigma_{\mathbf M} $ or $ \Sigma^*_{\mathbf M} $, equipped with the norm $ \| \cdot \|_{\mathcal H} := \| \cdot \|_0 + \text{Lip}(\cdot) $,
where $ \| \cdot \|_0 $ denotes the uniform norm and
$$ \text{Lip}(\phi) = \sup_{d(\mathbf x, \mathbf y) > 0} \frac{\left| \phi(\mathbf x) - \phi(\mathbf y) \right|}{d(\mathbf x, \mathbf y)} =
\sup_{d^*(\mathbf x^*, \mathbf y^*) > 0} \frac{\left| \phi^*(\mathbf x^*) - \phi^*(\mathbf y^*) \right|}{d^*(\mathbf x^*, \mathbf y^*)} =
\text{Lip}(\phi^*). $$

Using the standard subshift $ \Sigma_{\mathbf M} $ and its dual $ \Sigma^*_{\mathbf M} $, one may easily introduce its natural
invertible extension $ (\hat \Sigma_{\mathbf M}, \hat \sigma) $:
$$ \hat \Sigma_{\mathbf M} = \big \{ (\mathbf y^*, \mathbf x) \in \Sigma^*_{\mathbf M} \times \Sigma_{\mathbf M} : \mathbf M (y_0, x_0) = 1 \big \}, $$
$$ \hat \sigma(\ldots, y_1, y_0 | x_0, x_1, \ldots) = (\ldots, y_0, x_0 | x_1, x_2, \ldots). $$

Denote by $ \mathcal M_\sigma $ the weak* compact and convex set of $\sigma$-invariant Borel probability measures.
For any $ \mu \in \mathcal M_\sigma $, let $ h_\mu(\sigma) $ indicate the Kolmogorov-Sinai entropy.

\begin{definition}
Given a Lipschitz continuous function $ A : \hat \Sigma_{\mathbf M} \to
\mathbb R $, we consider then the map $ \mathcal G^+=\mathcal
G^+_A : \mathcal H \to \mathcal H $ defined\footnote{Notice that a more rigorous definition would consider
$ \int_{\Sigma_{\mathbf M}} \left( A(\mathbf y^*, \mathbf x) \mathbf M(\mathbf y^*, \mathbf x) + \phi(\mathbf x)\right) \; d\mu(\mathbf x) $,
where $ \mathbf M(\mathbf y^*, \mathbf x) := \mathbf M(y_0, x_0) $ for any point $ (\mathbf y^*, \mathbf x) = (\ldots, y_1, y_0 | x_0, x_1, \ldots) $.
We prefer to simplify the notation.}\label{multiplicacao M} by
$$ \mathcal G^+ (\phi)(\mathbf y^*) = \sup_{\mu \in \mathcal M_\sigma}
\left[ \int_{\Sigma_{\mathbf M}} \left( A(\mathbf y^*, \mathbf x) + \phi(\mathbf x)\right) \; d\mu(\mathbf x) + h_\mu(\sigma) \right] $$
\end{definition}

It is not difficult to see that $ \text{Lip}(\mathcal G^+ (\phi)) \le \| A \|_0 + \text{Lip}(A) $ for all $ \phi \in \mathcal H $.
Furthermore, thanks to the characterization via variational principle of the topological pressure $ P_{_{TOP}} : \mathcal H \to \mathbb R $, that is,
$$ P_{_{TOP}}(\phi) = \max_{\mu \in \mathcal M_\sigma} \left[ \int_{\Sigma_{\mathbf M}} \phi(\mathbf x) \; d\mu(\mathbf x) + h_\mu(\sigma) \right]
\quad \forall \; \phi \in \mathcal H, $$
we immediately get
$$ \mathcal G^+(\phi)(\mathbf y^*) = P_{_{TOP}}(A(\mathbf y^*, \cdot) + \phi). $$
In particular, thanks to the Ruelle-Perron-Frobenius Theorem,
for each $ \mathbf y^* \in \Sigma^*_{\mathbf M} $, there exists a unique
probability $ \mu_{\mathbf y^*} \in \mathcal M_\sigma $ (the equilibrium state associated to $ A(\mathbf y^*, \cdot) + \phi \in \mathcal H $)
achieving the supremum in the definition of the value $ \mathcal G^+(\phi)(\mathbf y^*) $.

As a physical motivation to analyze the above problem, we mention the paper by W. Chou and R. Griffiths \cite{CG}.
They study ground states of one-dimensional systems, in particular of a very common model in solid state physics: the Frenkel-Kontorova model (specific applications are presented in section VI).
They consider a certain model which depends on temperature and which has a natural potential. But due to interaction and temperature, there exists another potential, called the {\it effective potential}, which plays the essential role in the problem. The limit when temperature goes to zero is considered in section III B. Their expression (3.16) may be seen as a min-plus version of the one in our main theorem. See also \cite{Ba,CD,Co} for more details on additive eigenvalue problems.

The entropy penalization method was considered in \cite{GV} and \cite{GLM} (see the main properties on these references) in the setting of Aubry-Mather theory.
In \cite{LMST}, questions also related to the article of Chou and Griffiths were analyzed in the context of Markov chains on the interval.
The problem we consider here has similarities. Nevertheless, we point out that our entropy is Kolmogorov-Sinai entropy, which has a dynamical character. As we said before, our setting is the one of thermodynamic formalism \cite{PP}. Finally, the relation of the effective action problem with the ergodic Kantorovich transshipment problem
(see section 3), as far as we know, is completely new.

As another physical motivation for the study of the above problem, we mention section 2.5 of Salmhofer's book \cite{Sal}.
The function $ \phi $ plays there the role of a chemical potential. In \cite{Sal}, using another
notation, the expression (2.103) of the effective action for the interaction $-\lambda V$ and the propagator $C$
$$ \int e^{- \lambda V(\phi) + (C^{-1} \psi,\phi)} \, d \mu_C(\phi), \quad \text{for a fixed $\psi$,} $$
should be read, under our notation, as
$$ \int e^{\phi(\mathbf x) + A(\mathbf y^*, \mathbf x)} \, d \mu_C(\mathbf x), \quad \text{for a fixed $\mathbf y^*$.} $$
Note that the above probability maximizes pressure when one
considers, for each fixed $\mathbf y^* $, the observable $ A(\mathbf y^*, \cdot) + \phi $,
and the corresponding  variational problem where the entropy
$h(\nu|\mu_C)$ of a given $\nu$ is consider relative to a fixed
initial probability $\mu_C$. As we are in the framework of
thermodynamic formalism, we do not consider relative entropy, but
Kolmogorov-Sinai entropy.

Our main result can be stated as follows.

\begin{theorem}\label{principal}
Suppose $A: \hat \Sigma_{\mathbf M} \to \mathbb R$ is a Lipschitz continuous observable.
Then there exist a unique function $ \phi^+ \in \mathcal H $ (up to an additive constant)
and a unique constant $ \lambda^+ \in \mathbb R $ such that
$$ \mathcal G^+ (\phi^+) = \phi^+ + \lambda^+. $$
\end{theorem}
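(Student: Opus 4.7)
The plan is to treat this as a nonlinear additive (Perron--Frobenius) eigenvalue problem, combining a Schauder--Tychonoff fixed point argument for existence with a thermodynamic rigidity argument for uniqueness. First I would record three structural properties of $\mathcal G^+$. It is monotone ($\phi \le \psi$ pointwise implies $\mathcal G^+\phi \le \mathcal G^+\psi$), since only the integrand changes. It commutes with additive constants, $\mathcal G^+(\phi+c) = \mathcal G^+\phi + c$, because each $\mu$ is a probability. Consequently it is $1$-Lipschitz in the uniform norm: applying $\mathcal G^+$ to $\psi - \|\phi-\psi\|_0 \le \phi \le \psi + \|\phi-\psi\|_0$ gives $\|\mathcal G^+\phi - \mathcal G^+\psi\|_0 \le \|\phi-\psi\|_0$. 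Together with the already recorded bound $\mathrm{Lip}(\mathcal G^+\phi) \le L := \|A\|_0 + \mathrm{Lip}(A)$, these are all the ingredients I need.

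\textbf{Existence via Schauder--Tychonoff.} Fix a base point $\mathbf y^*_0$ and let
$$ K = \bigl\{ \phi \in C^0(\Sigma^*_{\mathbf M}) : \mathrm{Lip}(\phi) \le L, \; \phi(\mathbf y^*_0) = 0 \bigr\}, $$
which by Arzel\`a--Ascoli is convex and compact in $\|\cdot\|_0$. Introduce the normalized map $T\phi := \mathcal G^+\phi - \mathcal G^+\phi(\mathbf y^*_0)$. The uniform Lipschitz bound gives $T(K)\subset K$, and the $1$-Lipschitz property of $\mathcal G^+$ in $\|\cdot\|_0$ (together with continuity of evaluation at $\mathbf y^*_0$) makes $T$ continuous. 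Schauder--Tychonoff then yields a fixed point $\phi^+ \in K$; setting $\lambda^+ := \mathcal G^+(\phi^+)(\mathbf y^*_0)$ gives $\mathcal G^+(\phi^+) = \phi^+ + \lambda^+$. Finally, from $\phi^+ = \mathcal G^+(\phi^+) - \lambda^+$ one reads off $\mathrm{Lip}(\phi^+) \le L$, so indeed $\phi^+ \in \mathcal H$.

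\textbf{Uniqueness.} To show $\lambda^+$ is unique, suppose $\mathcal G^+\phi_i = \phi_i + \lambda_i$ for $i=1,2$. Iterating, $\mathcal G^{+,n}\phi_i = \phi_i + n\lambda_i$, so non-expansion forces $\|\phi_1 - \phi_2 + n(\lambda_1-\lambda_2)\|_0 \le \|\phi_1 - \phi_2\|_0$ for every $n$, which is possible only when $\lambda_1 = \lambda_2$. For uniqueness of $\phi^+$ modulo constants, assume $\phi_1,\phi_2$ share the same $\lambda^+$ and set $c := \max(\phi_1 - \phi_2)$, attained at some $\mathbf y^*_0$. Then $c = \mathcal G^+(\phi_1)(\mathbf y^*_0) - \mathcal G^+(\phi_2)(\mathbf y^*_0)$. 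Using the equilibrium state $\mu_1$ for $A(\mathbf y^*_0,\cdot) + \phi_1$, furnished by Ruelle--Perron--Frobenius, as a sub-optimal competitor for $\mathcal G^+(\phi_2)(\mathbf y^*_0)$ yields $\int(\phi_1 - \phi_2)\,d\mu_1 \ge c$. Combined with the pointwise bound $\phi_1 - \phi_2 \le c$ this forces $\int(\phi_1 - \phi_2)\,d\mu_1 = c$, hence $\phi_1 - \phi_2 = c$ on $\mathrm{supp}(\mu_1)$. Since $\mu_1$ is a Gibbs measure of a Lipschitz potential on an irreducible subshift of finite type, it has full support, and continuity of $\phi_1 - \phi_2$ promotes this to $\phi_1 - \phi_2 \equiv c$.

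\textbf{Main obstacle.} The existence step is essentially soft once the right normalization is chosen, and the rigidity $\lambda_1 = \lambda_2$ follows from the non-expansive iteration estimate alone. The genuinely delicate step -- and the one I expect to be the main obstacle -- is the uniqueness of $\phi^+$ up to an additive constant: it is precisely where the irreducibility of $\mathbf M$ and the full-support Gibbs property provided by Ruelle--Perron--Frobenius have to be brought in via the sub-optimality comparison above.
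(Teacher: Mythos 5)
Your proposal is correct, but it follows a genuinely different route from the paper. You get existence softly: the uniform Lipschitz bound $\mathrm{Lip}(\mathcal G^+\phi)\le \|A\|_0+\mathrm{Lip}(A)$ plus non-expansiveness in $\|\cdot\|_0$ let you run Schauder on the normalized compact convex set, and you then get uniqueness of $\lambda^+$ by the telescoping/non-expansion argument and uniqueness of $\phi^+$ modulo constants by a maximum-principle rigidity argument: comparing at a maximizer of $\phi_1-\phi_2$ with the optimal equilibrium state as a sub-optimal competitor forces $\phi_1-\phi_2$ to be constant on the support of a fully supported Gibbs measure. The paper instead proves a quantitative ``almost contraction'' in the quotient norm, $\|\mathcal G^+(\phi)-\mathcal G^+(\psi)\|_c\le(1-C\|\phi-\psi\|_c^{\alpha})\|\phi-\psi\|_c$ on the set $\{\mathrm{Lip}\le K\}$ (Theorem~\ref{Gomes-Valdinoci contraction}), whose key input is the lower bound $\mu_{\mathbf y}(B_\rho)\ge\Gamma\rho^{\alpha}$ for the equilibrium states (Lemma~\ref{regularidade dimensao}), and then invokes the Gomes--Valdinoci Banach--Caccioppoli-type fixed point theorem; existence and uniqueness come in one stroke, together with the extra conclusion that the normalized iterates $\mathcal G^{+,n}(\phi_0)$ converge to $\phi^+$ from any initial condition. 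So your argument is more elementary (no quantitative Gibbs estimate, no nonstandard fixed point theorem), while the paper's buys a convergence scheme and an explicit contraction rate; note that both your uniqueness step and the paper's Lemma~\ref{regularidade dimensao} ultimately rest on the same Gibbs-property input (you use it qualitatively, via full support of the equilibrium state on the irreducible subshift, the paper quantitatively), and both exploit the same comparison trick of evaluating the competitor measure in the supremum defining $\mathcal G^+$.
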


We point out that \cite{GLM, GV, LMST} consider a
similar problem but for the so called entropy penalization method.
The proof of this theorem will be presented in the end of the paper.
Obviously the function $\phi^+$ and the constant $ \lambda^+ $ in the previous statement depend on $A$.

\begin{definition}
Given a Lipschitz continuous observable $ A: \hat \Sigma_{\mathbf M} \to \mathbb R $, we say that a constant $\lambda^{+} \in \mathbb{R}$
is the effective constant for $ A $ if there exists a function $ \phi^{+} \in \mathcal H $ such that
$$ \mathcal G^+ (\phi^+) = \phi^+ + \lambda^+. $$
Any such a function $ \phi^{+} $ is called a (forward) effective potential for $ A $.
\end{definition}

\begin{definition}
Given a Lipschitz continuous observable $ A: \hat \Sigma_{\mathbf M} \to \mathbb R $ and a point $ \mathbf y^* \in \Sigma_{\mathbf M}^*$,
we say that the unique $\sigma$-invariant probability $ \mu_{\mathbf y^*} = \mu_{\mathbf y^*, A}$ on $ \Sigma_{\mathbf M} $ such that
$$ \int_{\Sigma_{\mathbf M}} \left( A(\mathbf y^*, \mathbf x) + \phi^+ (\mathbf x)\right) \; d\mu_{\mathbf y^*}(\mathbf x) + h_{\mu_{\mathbf y^*}}(\sigma) =
\phi^{+}(\mathbf y^*) + \lambda^{+} $$
is the effective probability for $ A $ at $ \mathbf y^*$, where $\phi^{+}$
and $\lambda^{+}$ are the effective ones associated to $A$. In this way, we get a family of Gibbs states on the variable $ \mathbf x$
indexed by $\mathbf y^*$.
\end{definition}

For a fixed $A$ as above, we consider a positive  parameter $\beta$, the observable $\beta A$, and
the corresponding $ \phi^{+}_\beta$, $\lambda^{+}_\beta$ and
$ \{ \mu_{\mathbf y^*, \beta A} \}_{\mathbf y^* \in \Sigma_{\mathbf M}^*} $.
 We investigate then the limit problem when $\beta \to \infty$, showing the existence
(in the uniform topology) of accumulation Lipschitz functions for the family $ \{\phi^{+}_\beta / \beta\}_{\beta>0} $,
characterizing the accumulation probabilities of $ \{ \mu_{\mathbf y^*, \beta A} \}_{\beta>0} $ for each $ \mathbf y^* $,
and proving that $\lambda^{+}_\beta / \beta $ converges (see section 2).

We remark at last that one could also consider the (backward) transformation
$ \mathcal G^{-}=\mathcal G^{-}_A : \mathcal H \to \mathcal H $ defined by
$$ \mathcal G^{-} (\phi)(\mathbf x) = \sup_{\mu \in \mathcal M_{\sigma^*}}
\left[ \int_{\Sigma_{\mathbf M}^*} \left( A(\mathbf y^*, \mathbf x) + \phi(\mathbf y^*)\right) \; d\mu(\mathbf y^*) + h_\mu(\sigma^*) \right] ,$$
and all analogous results could be easily stated and similarly proved.

The structure of the paper is the following: in section 2 we discuss the thermodynamic properties of the effective objects,
 in section 3 we consider the ergodic Kantorovich transshipment problem (which appears in a natural way when the
temperature goes to zero), and finally in section 4 we present the proof of the main theorem.

\end{section}

\begin{section}{Thermodynamic formalism at temperature zero}

The analysis of the thermodynamic formalism for a given observable
$A$ at temperature zero is, by definition, the study of the limit
of Gibbs probabilities associated to $A$ at temperature $T$,
that is, for $\frac{1}{T} \, A$, when $T \to 0$. We introduce a
parameter $\beta= \frac{1}{T}$, and we will analyze the Gibbs
probabilities for $\beta \, A$, when $\beta \to \infty.$

From now on, $\mathbf y^*$ is simply denoted by $\mathbf y$ and we identify the spaces $\Sigma_{\mathbf M}$ and $\Sigma_{\mathbf M}^*$.
For each real value $\beta$, we consider the map $ \mathcal G^+_{\beta A} : \mathcal H \to \mathcal H $
and the corresponding Lipschitz function $ \phi^{+}_\beta $, the forward effective potential for $\beta A$,
and the corresponding constant $\lambda^{+}_\beta \in \mathbb{R}$.
For each $\mathbf y$, we consider then the effective probability $\mu_{\mathbf y, \beta A}$ as before.
In order to avoid a heavy notation we will drop the $A$ and the $+$ in this section.

In this way, for each parameter $\beta$, we have the equation
$$ \mathcal G_\beta (\phi_\beta) = \phi_\beta + \lambda_\beta. $$

Recall that, for each $\mathbf y$, we have $ \mathcal G_\beta (\phi_\beta)(\mathbf y) = P_{_{TOP}}( \beta A(\mathbf y,\cdot) + \phi_\beta)$,
where the pressure is consider for the setting in the variable $\mathbf x$.
Therefore, for each $\mathbf y$ and $\beta$, one verifies
$$ \phi_\beta (\mathbf y) + \lambda_{\beta} = P_{_{TOP}}( \beta A(\mathbf y,\cdot) + \phi_\beta). $$

\begin{proposition}
The family $\frac{\phi_\beta}{\beta}$ is equilipchitz.
\end{proposition}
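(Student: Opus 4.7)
The plan is to leverage the Lipschitz bound on $\mathcal{G}^+(\phi)$ already observed in the introduction, combined with the fixed point equation $\mathcal{G}_\beta(\phi_\beta) = \phi_\beta + \lambda_\beta$, to get a bound on $\operatorname{Lip}(\phi_\beta)$ that scales linearly in $\beta$. Dividing by $\beta$ then produces a uniform bound, which is exactly equi-Lipschitzness.

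More precisely, first I would recall from the discussion right after the definition of $\mathcal{G}^+$ that, for the observable $A$, every function in the image $\mathcal{G}^+_A(\mathcal{H})$ has Lipschitz constant at most $\|A\|_0 + \operatorname{Lip}(A)$. The reason is that $\mathcal{G}^+(\phi)(\mathbf{y}^*) = P_{_{TOP}}(A(\mathbf{y}^*, \cdot) + \phi)$ and topological pressure is $1$-Lipschitz in the uniform norm, so varying $\mathbf{y}^*$ inside $A(\mathbf{y}^*, \cdot)$ produces a variation controlled by the Lipschitz constant of $A$ in its first variable. Replacing $A$ by $\beta A$ in this bound gives
$$ \operatorname{Lip}\bigl(\mathcal{G}_\beta(\phi)\bigr) \le \|\beta A\|_0 + \operatorname{Lip}(\beta A) = \beta\bigl(\|A\|_0 + \operatorname{Lip}(A)\bigr) $$
for every $\phi \in \mathcal{H}$ and every $\beta > 0$.

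Specializing to $\phi = \phi_\beta$ and using the fixed point equation $\mathcal{G}_\beta(\phi_\beta) = \phi_\beta + \lambda_\beta$, one has
$$ \operatorname{Lip}(\phi_\beta) = \operatorname{Lip}(\phi_\beta + \lambda_\beta) = \operatorname{Lip}\bigl(\mathcal{G}_\beta(\phi_\beta)\bigr) \le \beta\bigl(\|A\|_0 + \operatorname{Lip}(A)\bigr), $$
since adding the constant $\lambda_\beta$ does not alter the Lipschitz seminorm. Dividing both sides by $\beta$ yields
$$ \operatorname{Lip}\!\left(\frac{\phi_\beta}{\beta}\right) \le \|A\|_0 + \operatorname{Lip}(A), $$
a bound independent of $\beta$. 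This is precisely the desired equi-Lipschitz property.

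The proof is essentially immediate once the stated Lipschitz estimate for $\mathcal{G}^+$ is accepted, so there is no real obstacle beyond recognizing that the bound transforms correctly under the scaling $A \mapsto \beta A$. The only subtlety worth noting is that the bound depends solely on $A$ (through $\|A\|_0 + \operatorname{Lip}(A)$) and not on $\phi_\beta$ itself, which is what makes the argument work uniformly in $\beta$ without requiring any control on $\|\phi_\beta\|_0$ or $\lambda_\beta$.
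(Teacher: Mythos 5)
Your proposal is correct and follows essentially the same route as the paper: the paper simply re-derives, via the equilibrium state $\mu_{\mathbf y,\beta}$, the bound $\bigl|P_{_{TOP}}(\beta A(\mathbf y,\cdot)+\phi_\beta)-P_{_{TOP}}(\beta A(\overline{\mathbf y},\cdot)+\phi_\beta)\bigr|\le\beta(\|A\|_0+\operatorname{Lip}(A))\,d(\mathbf y,\overline{\mathbf y})$, which is exactly the Lipschitz estimate for $\mathcal G^+_{\beta A}$ that you invoke from the introduction, and then both arguments use the fixed point equation and the invariance of the Lipschitz seminorm under adding the constant $\lambda_\beta$. The only caveat is that the introduction's bound (and in particular the $\|A\|_0$ term, which comes from the factor $\mathbf M(\mathbf y,\mathbf x)$ in the footnoted definition) is stated there without proof, so strictly speaking your argument defers to it rather than proving it, whereas the paper's proof supplies that estimate explicitly.
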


\begin{proof}
For each pair of points $ \mathbf y $ and $ \overline{\mathbf y} $, one has
$$ \left | \phi_\beta (\mathbf y) - \phi_\beta(\overline{\mathbf y}) \right | =
\left | P_{_{TOP}}( \beta A(\mathbf y, \cdot) + \phi_\beta) - P_{_{TOP}}(\beta A(\overline{\mathbf y}, \cdot) + \phi_\beta) \right |. $$
The effective probability $ \mu_{\mathbf y, \beta} $ satisfies
$$ P_{_{TOP}}(\beta A(\mathbf y,\cdot) + \phi_\beta) = \int \left( \beta A(\mathbf y, \cdot) + \phi_\beta \right) \, d\mu_{\mathbf y,\beta} +
h_{\mu_{\mathbf y, \beta }}(\sigma). $$
It is clear that $ P_{_{TOP}}(\beta A(\overline{\mathbf y},\cdot) + \phi_\beta) \geq
\int \left(\beta A(\overline{\mathbf y},\cdot) + \phi_\beta\right)\,d\mu_{\mathbf y,\beta} + h_{\mu_{\mathbf y, \beta }}(\sigma) $.

Therefore, the inequality\footnote{Recall footnote~\ref{multiplicacao M}.}
\begin{multline*}
P_{_{TOP}}(\beta A(\mathbf y,\cdot) + \phi_\beta) - P_{_{TOP}}(\beta A(\overline{\mathbf y},\cdot) + \phi_\beta) \le \\
\le \beta \sup_{\mathbf x \in \Sigma_{\mathbf M}} |A(\mathbf y, \mathbf x) \mathbf M(\mathbf y, \mathbf x) -
A(\overline{\mathbf y}, \mathbf x) \mathbf M(\overline{\mathbf y}, \mathbf x)|
\end{multline*}
yields
$$ P_{_{TOP}}(\beta A(\mathbf y,\cdot) + \phi_\beta) - P_{_{TOP}}(\beta A(\overline{\mathbf y},\cdot) + \phi_\beta) \leq
\beta (\|A\|_0 + \text{Lip}(A)) \, d(\mathbf y, \overline{\mathbf y}). $$

Interchanging the roles of $ \mathbf y $ and $ \overline{\mathbf y} $ in the above reasoning, we get
$$ \left | \phi_\beta(\mathbf y) - \phi_\beta (\overline{\mathbf y}) \right | \leq \beta (\|A\|_0 + \text{Lip}(A)) \, d(\mathbf y, \overline{\mathbf y}), $$
and finally
$$ \text{Lip} \left( \frac{\phi_\beta }{\beta} \right) \leq  \|A\|_0 + \text{Lip}(A). $$
\end{proof}

Remember that the effective potential is unique up to an additive constant.
So we will consider the following condition: we fix a point $ \mathbf y^0 \in \Sigma_{\mathbf M}^*$
and we assume that $ \phi_\beta(\mathbf y^0)=0$ for all $ \beta $.
Via subsequences $ \beta_n \to \infty, $ with $ n \to \infty $, using the previous proposition, we get by the Arzela-Ascoli Theorem that
there exists a continuous function $ V : \Sigma_{\mathbf M}^* \to \mathbb{R}$ such that $ V (\mathbf y^0) = 0 $ and, in the uniform convergence,
$$\frac{\phi_{\beta_n}}{\beta_n} \to V.$$
Since $ \text{Lip}\left(\phi_\beta/\beta\right) \leq \|A\|_0 + \text{Lip}(A) $ implies $ \text{Lip}(V) \leq \|A\|_0 + \text{Lip}(A) $,
the function $ V $ is actually Lipschitz continuous.
Notice that, in principle, such a limit could depend on the chosen subsequence.

\begin{proposition}
Suppose that in the uniform convergence
$$ \frac{\phi_{\beta_n}}{\beta_n} \to V, $$
when $ \beta_n \to \infty $.
Let $\mu_{\mathbf y, \beta_n}$ be the effective probability for the observable $\beta_n A$ at a fixed point $ \mathbf y $.
Then, any accumulation probability measure $ \mu_{\mathbf y}^\infty \in \mathcal M_\sigma $ of the sequence $ \mu_{\mathbf y, \beta_n}$
is a maximizing probability for $ A(\mathbf y,\cdot) + V $, that is,
$$ \int (A(\mathbf y,\cdot) + V) \, d\mu_{\mathbf y}^\infty =
\max_{\mu \in \mathcal M_\sigma} \int (A(\mathbf y,\cdot) + V) \, d\mu. $$
\end{proposition}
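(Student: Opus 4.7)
The plan is to exploit the variational characterization that defines the effective probabilities $\mu_{\mathbf y, \beta_n}$, and then pass to the limit $n \to \infty$ using the two convergences at our disposal: the uniform convergence $\phi_{\beta_n}/\beta_n \to V$ and the (sub)sequential weak$^*$ convergence $\mu_{\mathbf y, \beta_n} \to \mu_{\mathbf y}^\infty$. The trick is to divide the variational inequality by $\beta_n$ so that the uniformly bounded entropy term drops out in the limit.

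More precisely, since $\mu_{\mathbf y, \beta_n}$ is the equilibrium state for $\beta_n A(\mathbf y, \cdot) + \phi_{\beta_n}$, the variational principle gives, for every $ \mu \in \mathcal M_\sigma $,
$$ \int \beta_n A(\mathbf y, \cdot)\,d\mu + \int \phi_{\beta_n}\,d\mu + h_\mu(\sigma) \le \int \beta_n A(\mathbf y, \cdot)\,d\mu_{\mathbf y, \beta_n} + \int \phi_{\beta_n}\,d\mu_{\mathbf y, \beta_n} + h_{\mu_{\mathbf y, \beta_n}}(\sigma). $$
Dividing by $\beta_n > 0$ I obtain
$$ \int A(\mathbf y, \cdot)\,d\mu + \int \tfrac{\phi_{\beta_n}}{\beta_n}\,d\mu + \tfrac{h_\mu(\sigma)}{\beta_n} \le \int A(\mathbf y, \cdot)\,d\mu_{\mathbf y, \beta_n} + \int \tfrac{\phi_{\beta_n}}{\beta_n}\,d\mu_{\mathbf y, \beta_n} + \tfrac{h_{\mu_{\mathbf y, \beta_n}}(\sigma)}{\beta_n}. $$

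Now I let $n \to \infty$ along the subsequence realizing the accumulation. The entropy terms vanish because $0 \le h_\nu(\sigma) \le \log r$ uniformly in $\nu \in \mathcal M_\sigma$. On the left-hand side, $\int \tfrac{\phi_{\beta_n}}{\beta_n}\,d\mu \to \int V\,d\mu$ by uniform convergence. On the right-hand side, continuity of $A(\mathbf y, \cdot)$ and weak$^*$ convergence yield $\int A(\mathbf y, \cdot)\,d\mu_{\mathbf y, \beta_n} \to \int A(\mathbf y, \cdot)\,d\mu_{\mathbf y}^\infty$; the remaining cross term is handled by splitting
$$ \int \tfrac{\phi_{\beta_n}}{\beta_n}\,d\mu_{\mathbf y, \beta_n} = \int V\,d\mu_{\mathbf y, \beta_n} + \int \bigl(\tfrac{\phi_{\beta_n}}{\beta_n} - V\bigr)\,d\mu_{\mathbf y, \beta_n}, $$
where the first summand converges to $\int V\,d\mu_{\mathbf y}^\infty$ by weak$^*$ convergence (as $V$ is continuous), and the second is bounded in absolute value by $\|\tfrac{\phi_{\beta_n}}{\beta_n} - V\|_0 \to 0$.

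Passing to the limit in the inequality, I conclude that for every $ \mu \in \mathcal M_\sigma $,
$$ \int (A(\mathbf y, \cdot) + V)\,d\mu \le \int (A(\mathbf y, \cdot) + V)\,d\mu_{\mathbf y}^\infty, $$
which is exactly the maximizing property claimed. I do not expect any serious obstacle: the only subtlety is the simultaneous dependence on $n$ of both the integrand $\phi_{\beta_n}/\beta_n$ and the measure $\mu_{\mathbf y, \beta_n}$ in the cross term, but the uniform convergence of the normalized potentials together with the total mass bound on probability measures disposes of it.
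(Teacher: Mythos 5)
Your proposal is correct and follows essentially the same route as the paper: the variational inequality comparing any $\mu \in \mathcal M_\sigma$ with the equilibrium state $\mu_{\mathbf y, \beta_n}$, divided by $\beta_n$, with the entropy terms vanishing and the limit taken using the uniform convergence of $\phi_{\beta_n}/\beta_n$ and the weak$^*$ convergence of the measures. Your explicit handling of the cross term $\int (\phi_{\beta_n}/\beta_n)\, d\mu_{\mathbf y, \beta_n}$ merely spells out a detail the paper leaves implicit.
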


\begin{proof}
Take any $\sigma$-invariant probability $\mu$. Thus
\begin{eqnarray*}
\int (\beta_n  A(\mathbf y, \cdot) + \phi_{\beta_n}) \, d \mu + h_{\mu}(\sigma)
& \leq & P_{_{TOP}}(\beta_n A(\mathbf y,\cdot) + \phi_{\beta_n}) \\
& = & \int (\beta_n  A(\mathbf y, \cdot) + \phi_{\beta_n}) \, d \mu_{\mathbf y, \beta_n} + h_{\mu_{\mathbf y, \beta_n }}(\sigma).
\end{eqnarray*}
Given an accumulation probability measure $ \mu_{\mathbf y}^\infty $ of the sequence $ \mu_{\mathbf y, \beta_n} $, from
$$ \int \left(A(\mathbf y, \cdot) + \frac{\phi_{\beta_n}}{\beta_n}\right) \, d \mu + \frac{1}{\beta_n} h_{\mu}(\sigma) \leq
\int \left(A(\mathbf y, \cdot) + \frac{\phi_{\beta_n}}{\beta_n}\right) \, d \mu_{\mathbf y, \beta_n} + \frac{1}{\beta_n} h_{\mu_{\mathbf y, \beta_n}}(\sigma), $$
we get the inequality
$$ \int (A(\mathbf y, \cdot) + V) \, d \mu \leq \int (A(\mathbf y, \cdot) + V) \, d \mu_{\mathbf y}^\infty. $$
Therefore, $ \mu_{\mathbf y}^\infty $ is a maximizing probability for $ A(\mathbf y, \cdot) + V $.
\end{proof}

\begin{proposition}\label{acumulacao}
Assume that in the uniform convergence
$$ \frac{\phi_{\beta_n}}{\beta_n}\to V, $$
when $\beta_n \to \infty$. Suppose also that
$\mu_{\mathbf y, \beta_n}$, the effective probability
for the observable $\beta_n A$ at a fixed point $ \mathbf y $,
converges in the weak* topology to $ \mu_{\mathbf y}^\infty \in \mathcal M_\sigma $.
Then,
\begin{eqnarray*}
\lim_{n \to \infty} \frac{\lambda_{\beta_n}}{\beta_n}
& = & \max_{\mu \in \mathcal M_\sigma}  \int_{\Sigma_{\mathbf M}} \left( A(\mathbf y, \mathbf x) + V(\mathbf x)- V(\mathbf y) \right) \, d\mu(\mathbf x) \\
& = & \int_{\Sigma_{\mathbf M}} \left( A(\mathbf y, \mathbf x) + V(\mathbf x)- V(\mathbf y) \right) \, d\mu_{\mathbf y}^\infty(\mathbf x).
\end{eqnarray*}
\end{proposition}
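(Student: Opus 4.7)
The plan is to start from the defining fixed--point equation for $\phi_\beta$, normalize by $\beta$, and pass to the limit term by term along the subsequence $\beta_n$.

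First, for every $\beta$ the effective probability $\mu_{\mathbf y,\beta}$ realizes the supremum defining $\mathcal G_\beta(\phi_\beta)(\mathbf y)$, so
\begin{equation*}
\phi_\beta(\mathbf y) + \lambda_\beta \;=\; P_{_{TOP}}(\beta A(\mathbf y,\cdot) + \phi_\beta) \;=\; \int \bigl(\beta A(\mathbf y,\cdot) + \phi_\beta\bigr)\, d\mu_{\mathbf y,\beta} + h_{\mu_{\mathbf y,\beta}}(\sigma).
\end{equation*}
Dividing by $\beta$, I obtain
\begin{equation*}
\frac{\phi_\beta(\mathbf y)}{\beta} + \frac{\lambda_\beta}{\beta} \;=\; \int \left( A(\mathbf y,\cdot) + \frac{\phi_\beta}{\beta}\right) d\mu_{\mathbf y,\beta} + \frac{h_{\mu_{\mathbf y,\beta}}(\sigma)}{\beta}.
\end{equation*}

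Next I take $\beta=\beta_n\to\infty$. The left--hand term $\phi_{\beta_n}(\mathbf y)/\beta_n$ converges to $V(\mathbf y)$ by hypothesis. On the right--hand side, the sequence of integrands $A(\mathbf y,\cdot) + \phi_{\beta_n}/\beta_n$ converges to $A(\mathbf y,\cdot)+V$ in the uniform norm, while $\mu_{\mathbf y,\beta_n}\to\mu_{\mathbf y}^\infty$ in the weak* topology; a standard ``uniform + weak*'' argument then yields
\begin{equation*}
\int \left(A(\mathbf y,\cdot) + \frac{\phi_{\beta_n}}{\beta_n}\right) d\mu_{\mathbf y,\beta_n} \;\longrightarrow\; \int\bigl(A(\mathbf y,\cdot)+V\bigr)\, d\mu_{\mathbf y}^\infty.
\end{equation*}
Finally, the entropy contribution vanishes: since $h_\nu(\sigma)\le \log r$ (equivalently, bounded by the topological entropy of the subshift) for every $\sigma$-invariant measure, the term $h_{\mu_{\mathbf y,\beta_n}}(\sigma)/\beta_n$ tends to $0$. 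Combining these three limits shows that $\lambda_{\beta_n}/\beta_n$ has a genuine limit, and rearranging gives
\begin{equation*}
\lim_{n\to\infty}\frac{\lambda_{\beta_n}}{\beta_n} \;=\; \int \bigl(A(\mathbf y,\cdot)+V-V(\mathbf y)\bigr)\, d\mu_{\mathbf y}^\infty.
\end{equation*}

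To conclude, I invoke the previous proposition, which asserts that $\mu_{\mathbf y}^\infty$ is a maximizing probability for $A(\mathbf y,\cdot)+V$. Because $V(\mathbf y)$ is a constant independent of the integration variable and every $\mu\in\mathcal M_\sigma$ is a probability, this at once gives the equality with $\max_{\mu\in\mathcal M_\sigma}\int (A(\mathbf y,\cdot)+V-V(\mathbf y))\,d\mu$, finishing the proof.

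The only point that requires any care is the interchange of limits in the integral term: one needs to know that uniform convergence of the integrands combined with weak* convergence of the measures suffices to pass to the limit, and that the entropy, though not continuous, is uniformly bounded so that $h/\beta_n\to 0$ trivially. Neither point is a genuine obstacle; the argument is essentially the standard ``zero--temperature'' limit of the variational principle applied pointwise in the $\mathbf y$ variable.
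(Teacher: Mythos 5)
Your proposal is correct and follows essentially the same route as the paper: start from the fixed-point identity $\phi_{\beta_n}(\mathbf y)+\lambda_{\beta_n}=\int(\beta_n A(\mathbf y,\cdot)+\phi_{\beta_n})\,d\mu_{\mathbf y,\beta_n}+h_{\mu_{\mathbf y,\beta_n}}(\sigma)$, divide by $\beta_n$, pass to the limit, and invoke the preceding proposition to identify $\mu_{\mathbf y}^\infty$ as a maximizing measure for $A(\mathbf y,\cdot)+V$. You merely spell out the uniform-plus-weak* convergence of the integral and the $h/\beta_n\to 0$ step, which the paper leaves implicit.
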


\begin{proof}
As for any given point $\mathbf y$
$$ \phi_{\beta_n}(\mathbf y) + \lambda_{\beta_n} =
\int \left( \beta_n  A(\mathbf y,\cdot) + \phi_{\beta_n} \right) \, d \mu_{\mathbf y, \beta_n} + h_{\mu_{\mathbf y, \beta_n}}(\sigma), $$
then dividing this expression by $\beta_n$, taking limit, and using last proposition, we immediately get the claim.
\end{proof}

We point out that obviously the limit function $ V \in \mathcal H $ and the limit measure $ \mu_{\mathbf y}^\infty \in \mathcal M_\sigma $ may
depend on the particular choice of the sequence $\beta_n$. Notice the previous proposition guarantees that the value
$ \int_{\Sigma_{\mathbf M}} \left( A(\mathbf y, \cdot) + V - V(\mathbf y) \right) \, d\mu_{\mathbf y}^\infty $ does not
depend on the point $ \mathbf y $. Actually it does not even depend on the function $ V $.

\begin{proposition}
Suppose that in the uniform convergence
$$ \frac{\phi_{\beta_n}}{\beta_n}\to V \quad \text{and} \quad \frac{\phi_{\bar\beta_n}}{\bar\beta_n}\to \overline V, $$
when $\beta_n, \bar\beta_n \to \infty$. Then, for all point $ \mathbf y $,
\begin{multline*}
\max_{\mu \in \mathcal M_\sigma}  \int_{\Sigma_{\mathbf M}} \left( A(\mathbf y, \mathbf x) + V(\mathbf x)- V(\mathbf y) \right) \, d\mu(\mathbf x) = \\
= \max_{\mu \in \mathcal M_\sigma}  \int_{\Sigma_{\mathbf M}} \left( A(\mathbf y, \mathbf x) + \overline V(\mathbf x)- \overline V(\mathbf y) \right) \, d\mu(\mathbf x).
\end{multline*}
\end{proposition}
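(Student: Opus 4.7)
The plan is to reduce the pointwise equality of the two maxima to a single scalar identity, and then to settle that identity by the classical trick of iterating a non-expansive operator that commutes with additive constants.

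First I would use Proposition \ref{acumulacao} to kill the dependence on $\mathbf y$. Given $\mathbf y$ and the sequence $\beta_n$ with $\phi_{\beta_n}/\beta_n \to V$ uniformly, by weak$^*$ compactness of $\mathcal M_\sigma$ I can extract a further subsequence along which the effective probability $\mu_{\mathbf y, \beta_n}$ also converges. Proposition \ref{acumulacao} then yields
$$\max_{\mu \in \mathcal M_\sigma} \int_{\Sigma_{\mathbf M}} \left( A(\mathbf y, \mathbf x) + V(\mathbf x) - V(\mathbf y) \right) d\mu(\mathbf x) \;=\; \lim_{n} \frac{\lambda_{\beta_n}}{\beta_n}.$$
Since the left-hand side is intrinsically defined from $V$ and $\mathbf y$ (it does not know about the sub-subsequence used), it must be independent of $\mathbf y$; call the common value $L_V$. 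The same reduction applied to $\bar\beta_n$ and $\overline V$ produces a constant $L_{\overline V}$, and the proposition reduces to proving $L_V = L_{\overline V}$.

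To establish this scalar identity I would introduce the nonlinear operator $T : \mathcal H \to \mathcal H$ defined by $T(W)(\mathbf y) := \max_{\mu \in \mathcal M_\sigma} \int (A(\mathbf y, \cdot) + W) \, d\mu$. By the previous step, $T(V) = V + L_V$ and $T(\overline V) = \overline V + L_{\overline V}$. Two elementary facts close the argument. First, $T$ commutes with the addition of constants, $T(W + c) = T(W) + c$, directly from the definition. Second, $T$ is non-expansive in the uniform norm, as seen by testing the maximizer for $W_1$ against the functional associated to $W_2$ and symmetrizing. Iterating the first gives $T^n(V) = V + n L_V$ and $T^n(\overline V) = \overline V + n L_{\overline V}$, while iterating the second gives $\| T^n(V) - T^n(\overline V) \|_0 \leq \| V - \overline V \|_0$. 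Combining these,
$$n \, | L_V - L_{\overline V} | \;\leq\; \| (V - \overline V) + n (L_V - L_{\overline V}) \|_0 + \| V - \overline V \|_0 \;\leq\; 2 \, \| V - \overline V \|_0$$
for every $n \geq 1$, which forces $L_V = L_{\overline V}$.

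The only subtlety is in the reduction step: Proposition \ref{acumulacao} requires convergence of the effective probability, which must be arranged for each $\mathbf y$ separately by a further subsequence extraction, yet the value it computes depends only on $V$ and $\mathbf y$. Once this is noted, the remainder is the familiar non-expansive-semigroup argument for additive-eigenvalue problems and presents no significant difficulty.
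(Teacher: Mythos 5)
Your argument is correct, and it shares its first half with the paper: both proofs use Proposition~\ref{acumulacao} to convert the hypothesis into the additive-eigenfunction identities $\max_{\mu \in \mathcal M_\sigma}\int\left(A(\mathbf y,\cdot)+V\right)d\mu = V(\mathbf y)+c$ for all $\mathbf y$, and likewise for $\overline V$ with a constant $\bar c$. (On the sub-subsequence point you flag: the paper handles it slightly more cleanly by first passing to a subsequence along which $\lambda_{\beta_n}/\beta_n$ itself converges --- it is a bounded family --- and only then extracting, for each fixed $\mathbf y$, a convergent subsequence of the effective probabilities; stating your reduction this way makes the $\mathbf y$-independence of $L_V$ immediate rather than an ``intrinsically defined'' appeal.) Where you genuinely depart from the paper is in proving $c=\bar c$. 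You introduce the zero-temperature operator $T(W)(\mathbf y)=\max_\mu\int(A(\mathbf y,\cdot)+W)\,d\mu$, observe it commutes with additive constants and is non-expansive in $\|\cdot\|_0$ (the same one-line computation the paper carries out for $\mathcal G^+$ at the start of section 4), and iterate to get $n\,|L_V-L_{\overline V}|\le 2\,\|V-\overline V\|_0$; note that no regularity preservation is needed since $T^n(V)=V+nL_V$ follows by induction from commutation with constants alone. The paper instead argues pointwise: it evaluates the two identities at a global maximum point $\mathbf y^0$ of $V-\overline V$, tests the maximizing measure $\mu_0$ for the $V$-problem at $\mathbf y^0$ against the $\overline V$-problem, and obtains $c\le\bar c$ in one step, concluding by symmetry. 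Both are standard uniqueness arguments for additive eigenvalues; your semigroup iteration is more in the spirit of weak-KAM/nonexpansive fixed-point theory and generalizes readily, while the paper's max-point comparison is shorter and avoids introducing the operator $T$ at all, needing only one measure and one point.
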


\begin{proof}
Passing to subsequences if necessary, we use the previous proposition to define
$$ c := \lim_{n \to \infty} \frac{\lambda_{\beta_n}}{\beta_n} \quad \text{and} \quad \bar c := \lim_{n \to \infty} \frac{\lambda_{\bar\beta_n}}{\bar\beta_n}. $$
Notice that, again from proposition~\ref{acumulacao},
\begin{align*}
 V(\mathbf y) + c & = \max_{\mu \in \mathcal M_\sigma}  \int_{\Sigma_{\mathbf M}} \left( A(\mathbf y, \mathbf x) + V(\mathbf x) \right) \, d\mu(\mathbf x) \quad {and} \\
 \overline V(\mathbf y) + \bar c  & = \max_{\mu \in \mathcal M_\sigma}  \int_{\Sigma_{\mathbf M}} \left( A(\mathbf y, \mathbf x) + \overline V(\mathbf x) \right) \, d\mu(\mathbf x),
\end{align*}
for all point $ \mathbf y $. Let $ \mathbf y^0 $ be a global maximum point for $ V - \overline V $. Consider then a probability $ \mu_0 \in \mathcal M_\sigma $ such that
$$ V(\mathbf y^0) + c =  \int_{\Sigma_{\mathbf M}} \left( A(\mathbf y^0, \mathbf x) + V(\mathbf x) \right) \, d\mu_0(\mathbf x). $$
It clearly follows that
\begin{eqnarray*}
V(\mathbf y^0) + c - \overline V(\mathbf y^0) - \bar c & \le &
 \int_{\Sigma_{\mathbf M}} \left[\left( A(\mathbf y^0, \mathbf x) + V(\mathbf x) \right) -  \left( A(\mathbf y^0, \mathbf x) + \overline V(\mathbf x)\right) \right] d\mu_0(\mathbf x) \\
 & = & \int_{\Sigma_{\mathbf M}} \left( V(\mathbf x) - \overline V(\mathbf x) \right) \, d\mu_0(\mathbf x) \le
 V(\mathbf y^0) - \overline V(\mathbf y^0),
\end{eqnarray*}
which shows that $ c \le \bar c $. We can proceed in the same way changing in the reasoning $V$ and $\overline V$. Therefore $ c = \bar c $.
\end{proof}

\begin{theorem}
There exists the limit
$$ c_A := \lim_{\beta \to \infty} \frac{\lambda_{\beta}}{\beta}. $$
\end{theorem}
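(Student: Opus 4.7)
The plan is to reduce the convergence claim to the uniqueness of subsequential limits, exploiting the compactness results already established. First I would verify that $\lambda_\beta/\beta$ stays bounded: from $\phi_\beta(\mathbf y)+\lambda_\beta = P_{_{TOP}}(\beta A(\mathbf y,\cdot) + \phi_\beta)$ at the normalization point $\mathbf y^0$, together with the trivial bounds $-\beta\|A\|_0 - \|\phi_\beta\|_0 \le P_{_{TOP}}(\beta A(\mathbf y^0,\cdot)+\phi_\beta) \le \beta\|A\|_0 + \|\phi_\beta\|_0 + \log r$, and the estimate $\|\phi_\beta\|_0 \le \beta(\|A\|_0+\text{Lip}(A))\,\text{diam}(\Sigma_{\mathbf M}^*)$ coming from the equi-Lipschitz proposition and the normalization $\phi_\beta(\mathbf y^0)=0$, I obtain a uniform bound $|\lambda_\beta|/\beta \le C$.

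Once boundedness is established, by Bolzano-Weierstrass it suffices to show that every two convergent subsequences of $\lambda_\beta/\beta$ share a common limit. So I would take sequences $\beta_n, \bar\beta_n \to \infty$ with $\lambda_{\beta_n}/\beta_n \to c$ and $\lambda_{\bar\beta_n}/\bar\beta_n \to \bar c$. Using the equi-Lipschitz bound on $\phi_\beta/\beta$ and the Arzela-Ascoli theorem, after passing to sub-subsequences I may assume $\phi_{\beta_n}/\beta_n \to V$ and $\phi_{\bar\beta_n}/\bar\beta_n \to \overline V$ uniformly, with $V,\overline V \in \mathcal H$. Fixing any reference point $\mathbf y \in \Sigma_{\mathbf M}^*$, the weak* compactness of $\mathcal M_\sigma$ allows a further extraction so that the effective probabilities $\mu_{\mathbf y, \beta_n}$ and $\mu_{\mathbf y, \bar\beta_n}$ converge weakly to some $\mu_{\mathbf y}^\infty, \bar\mu_{\mathbf y}^\infty \in \mathcal M_\sigma$ respectively.

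Now I invoke Proposition~\ref{acumulacao} on each side to identify the limits:
$$ c = \max_{\mu \in \mathcal M_\sigma} \int_{\Sigma_{\mathbf M}} \!\left( A(\mathbf y, \mathbf x) + V(\mathbf x) - V(\mathbf y) \right) d\mu(\mathbf x), $$
$$ \bar c = \max_{\mu \in \mathcal M_\sigma} \int_{\Sigma_{\mathbf M}} \!\left( A(\mathbf y, \mathbf x) + \overline V(\mathbf x) - \overline V(\mathbf y) \right) d\mu(\mathbf x). $$
The previous proposition (which compares the two maxima for two accumulation potentials $V,\overline V$) then gives $c = \bar c$ directly, completing the proof that all subsequential limits coincide, so the full limit $c_A = \lim_{\beta \to \infty} \lambda_\beta/\beta$ exists.

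There is really no single hard step here: the bulk of the work has already been done in the equi-Lipschitz proposition, in Proposition~\ref{acumulacao}, and in the preceding comparison proposition. The only mild care needed is the double subsequence argument, making sure that one may extract a sub-subsequence along which simultaneously the normalized potentials converge uniformly \emph{and} the effective probabilities at the chosen $\mathbf y$ converge weakly, so that Proposition~\ref{acumulacao} is applicable; this is straightforward since both extractions are made from compact metrizable spaces.
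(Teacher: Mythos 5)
Your proposal is correct and follows essentially the same route as the paper, whose one-line proof simply invokes the preceding propositions to conclude that $\{\lambda_\beta/\beta\}_{\beta>0}$ has a unique accumulation point; your double-subsequence extraction plus Proposition~\ref{acumulacao} and the comparison proposition is exactly the intended argument. The only addition is your explicit boundedness estimate for $\lambda_\beta/\beta$, which the paper leaves implicit but which is needed (and correctly derived) to pass from uniqueness of accumulation points to existence of the limit.
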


\begin{proof}
The previous propositions guarantee that $ \{ \lambda_\beta / \beta \}_{\beta > 0} $ has a unique accumulation point as $ \beta $ goes to infinity.
\end{proof}

 In the next section, we explain how the real constant $ c_A $ is related with an ergodic Kantorovich transshipment  problem.

\end{section}

\begin{section}{Ergodic Transshipment}

We remark that one may write, for all limit function $ V \in \mathcal H $ and for any point $ \mathbf y $,
\begin{equation}\label{cAmax}
c_A = \max_{\mu \in \mathcal M_\sigma}  \int_{\Sigma_{\mathbf M}} \left( A(\mathbf y, \mathbf x) + V(\mathbf x)- V(\mathbf y) \right) \, d\mu(\mathbf x).
\end{equation}
Therefore, from ergodic optimization theory, one obtains that
$$ c_A = \inf_{f \in C^0(\Sigma_{\mathbf M})} \sup_{\mathbf x \in \Sigma_{\mathbf M}} \left[A(\mathbf y, \mathbf x) + V(\mathbf x) - V(\mathbf y) + f(\mathbf x) - f(\sigma(\mathbf x))\right]. $$
Moreover, if we fixed a limit function $ V \in \mathcal H $, for each point $ \mathbf y $, there exists a function $ U_{\mathbf y} \in \mathcal H $ (called a \emph{sub-action with respect to $ A(\mathbf y, \cdot) + V - V(\mathbf y)$}) such that
\begin{equation}\label{subacao}
A(\mathbf y, \mathbf x) + V(\mathbf x) - V(\mathbf y) + U_{\mathbf y}(\mathbf x) - U_{\mathbf y}(\sigma(\mathbf x)) \le c_A, \quad \forall \, \mathbf x \in \Sigma_{\mathbf M},
\end{equation}
and the equality holds on the support of the maximizing measure $ \mu_{\mathbf y}^{\infty} $. We refer the reader to
\cite{CLT,CoG,Jenkinson} for details on ergodic optimization theory.

Notice that equation~\eqref{subacao} implies that
$$ V(\mathbf y) + c_A \ge A(\mathbf y, \mathbf x) + V(\mathbf x) + U_{\mathbf y}(\mathbf x) - U_{\mathbf y}(\sigma(\mathbf x)), \quad \forall \, (\mathbf y, \mathbf x) \in \hat \Sigma_{\mathbf M}.  $$
Furthermore, since the equality holds at $ (\mathbf y, \mathbf x) $ whenever $ \mathbf x $ belongs to the support of $ \mu_{\mathbf y}^{\infty} $, one has
\begin{equation}\label{Vautad}
V(\mathbf y) + c_A = \sup_{\mathbf x} \left[ A(\mathbf y, \mathbf x) + V(\mathbf x) + U_{\mathbf y}(\mathbf x) - U_{\mathbf y}(\sigma(\mathbf x)) \right], \quad \forall \, \mathbf y \in \Sigma_{\mathbf M}^*.
\end{equation}
We get from the above equation (see, for instance, \cite{CD}) that  $V$ is an additive eigenfunction and $ c_A $ is an additive eigenvalue for
$$ \mathcal C (\mathbf y, \mathbf x) :=A(\mathbf y, \mathbf x) + U_{\mathbf y} (\mathbf x) - U_{\mathbf y} (\sigma(\mathbf x)), \quad \forall \, (\mathbf y, \mathbf x) \in \hat \Sigma_{\mathbf M}. $$
The question about uniqueness of the $V$ which is solution of an additive problem is not so simple.
For instance, it was considered in section 4 in \cite{LMST}, but it requires some stringent assumptions.

Notice now that, by its very construction, the map $ (\mathbf y, \mathbf x) \mapsto U_{\mathbf y}(\mathbf x) $ may depend on the fixed limit function $ V $. Moreover, we only have information on its Lipschitz regularity on the $ \mathbf x $ variable. In particular, one cannot say \emph{a priori} how the map $ (\mathbf y, \mathbf x) \mapsto  \mathcal C (\mathbf y, \mathbf x) $ varies.

However, it is not difficult to provide examples of observables defining a continuous application $ \mathcal C $ as above.
For instance, considering any $ A_1, A_2 \in \mathcal H $, this is the case for the observable
$$ A(\mathbf y, \mathbf x) = A_1(\mathbf x) + A_2(\mathbf y), \quad \forall \,
 (\mathbf y, \mathbf x) \in \hat \Sigma_{\mathbf M} . $$
Indeed, if $ V \in \mathcal H $ is any possible limit function, let $ U \in \mathcal H $ be a sub-action with respect to $ A_1 + V $,
that is:
$$  A_1(\mathbf x) + V(\mathbf x) + U(\mathbf x) - U(\sigma(\mathbf x)) \le \max_{\mu \in \mathcal M_\sigma}  \int \left( A_1 + V\right) \, d\mu, \quad \forall \, \mathbf x  \in \Sigma_{\mathbf M}. $$
From~\eqref{cAmax}, we then get
$$ A(\mathbf y, \mathbf x)  + V(\mathbf x)  - V(\mathbf y) + U(\mathbf x) - U(\sigma(\mathbf x)) \le c_A $$
everywhere on $  \hat \Sigma_{\mathbf M} $. In particular, we may choose $ U_{\mathbf y} \equiv U $ for all $ \mathbf y $ in such a situation.

In general, by standard selection arguments (see section 2.1 in \cite{Mol} and references therein),  one may always assure the existence of a family of sub-actions $ \{U_{\mathbf y}\}_{\mathbf y}$ for which the corresponding map $ (\mathbf y, \mathbf x) \mapsto  \mathcal C (\mathbf y, \mathbf x) $ is Borel measurable. The main point is to consider just those sub-actions obtained as accumulation functions of eigenfunctions of Ruelle transfer operator when the temperature goes to zero through some fixed sequence (see proposition 29 in \cite{CLT}). Note that these eigenfunctions are continuous on the observable.  We leave the details to the reader. Finally, it is well known in
ergodic optimization theory that these sub-actions have  uniformly bounded  oscillation. Hence, for each fixed limit function $ V $, there exists a family $ \{U_{\mathbf y}\}_{\mathbf y} $ of sub-actions  with respect to $ A(\mathbf y, \cdot) + V - V(\mathbf y)$ such that the the map
$$ (\mathbf y, \mathbf x) \in \hat \Sigma_{\mathbf M} \mapsto  \mathcal C (\mathbf y, \mathbf x) =A(\mathbf y, \mathbf x) + U_{\mathbf y} (\mathbf x) - U_{\mathbf y} (\sigma(\mathbf x)) $$
is Borel measurable and bounded\footnote{Notice that it obviously follows from~\eqref{subacao} that a such map $ \mathcal C $ is bounded from above.}.

We consider from  now on $   \mathcal C $ as a bounded measurable cost function in order to introduce a transshipment problem.
Let then $ \pi : \hat \Sigma_{\mathbf M} \to \Sigma_{\mathbf M} $ and $\pi^* : \hat \Sigma_{\mathbf M} \to \Sigma_{\mathbf M}^* $ be the canonical projections.
We are specially interested  in the set of Borel  probabilities $ \hat{\eta} (d\mathbf y, d\mathbf x)$ on $ \hat{\Sigma}_{\mathbf M} $ verifying $ (\pi)_*  (\hat{\eta} ) = (\pi^*)_*  (\hat{\eta} ) $.

\begin{definition} [{\bf The Ergodic  Kantorovich Transshipment Problem}]
Given $ A : \hat \Sigma_{\mathbf M} \to \mathbb R $ Lipschitz continuous, we are interested in the maximization problem
\begin{eqnarray*}
\kappa_{\text{erg}} & := &  \sup_{ (\pi)_*  (\hat{\eta} ) = (\pi^*)_*  (\hat{\eta} ) } \;  \iint_{\hat \Sigma_{\mathbf M}} \mathcal C(\mathbf y,\mathbf x) \, d\hat{\eta}(\mathbf y,\mathbf x) \\
 & = &  \sup_{ (\pi)_*  (\hat{\eta} ) = (\pi^*)_*  (\hat{\eta} ) } \;  \iint_{\hat \Sigma_{\mathbf M}} [\,A(\mathbf y,\mathbf x) - U_{\mathbf y} (\mathbf x) - U_{\mathbf y} (\sigma(\mathbf x))\,]\, d\hat{\eta}(\mathbf y,\mathbf x).
\end{eqnarray*}
An ergodic transshipment measure for $A$ is a probability $\hat\eta$  on $ \hat{\Sigma}_{\mathbf M} $, with $  (\pi)_*  (\hat{\eta} ) = (\pi^*)_*  (\hat{\eta} ) $, that attains such a supremum.
\end{definition}

We point out that the classical transport or transshipment problems do not have an intrinsic ergodic nature.
Note that $ \mathcal C $ has a dynamical character.
We refer the reader to \cite{Ra} for general results (not of ergodic nature) on transshipment.
In \cite{LOT}, it is consider an ergodic transport problem.

We claim that $ c_A = \kappa_{\text{erg}} $, or in a more self-contained statement:

\begin{theorem} For the Lipschitz observable $ \beta A $, $ \beta>0$, consider its forward effective potential  $ \phi_\beta^+ $
(normalized by $ \phi_\beta^+(\mathbf y^0) = 0 $) and its  effective constant $ \lambda_\beta^+ $. Assume that in the uniform convergence
$ \phi_{\beta_n}^+ / \beta_n \to V $,
when $\beta_n \to \infty$.  Then there exists a family $ \{U_{\mathbf y}\}_{\mathbf y} $ of sub-actions with respect to
 $ A(\mathbf y, \cdot) + V - V(\mathbf y)$ such that
$$ \lim_{\beta \to \infty} \frac{\lambda_\beta^+}{\beta} =  \sup_{ (\pi)_*  (\hat{\eta} ) = (\pi^*)_*  (\hat{\eta} ) } \;   \iint_{\hat \Sigma_{\mathbf M}} [\,A(\mathbf y,\mathbf x) - U_{\mathbf y} (\mathbf x) - U_{\mathbf y} (\sigma(\mathbf x))\,]\, d\hat{\eta}(\mathbf y,\mathbf x). $$
\end{theorem}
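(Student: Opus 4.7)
The plan is to prove the equality $c_A = \kappa_{\text{erg}}$ by matching upper and lower bounds. Throughout, the delicate point is that the cost $\mathcal C$ is only Borel measurable on $\hat\Sigma_{\mathbf M}$ (since the family $\{U_{\mathbf y}\}_{\mathbf y}$ provided by the selection procedure has merely this regularity in $\mathbf y$), whereas $A$ is Lipschitz. The argument therefore decomposes $\mathcal C = A + (U_{\mathbf y}\text{-coboundary in }\mathbf x)$ and treats the continuous and discontinuous pieces by different means.

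For the inequality $\kappa_{\text{erg}} \le c_A$, let $\hat \eta$ be any Borel probability on $\hat\Sigma_{\mathbf M}$ satisfying $(\pi)_*\hat\eta = (\pi^*)_*\hat\eta$. Inequality~\eqref{subacao} rewrites as $\mathcal C(\mathbf y, \mathbf x) \le c_A + V(\mathbf y) - V(\mathbf x)$ everywhere, and since $V \in \mathcal H$ is identified as a single function on both $\Sigma_{\mathbf M}$ and $\Sigma_{\mathbf M}^*$, the transshipment condition forces $\iint [V(\mathbf y) - V(\mathbf x)]\, d\hat\eta = 0$. Integrating the pointwise inequality then yields $\iint \mathcal C\, d\hat\eta \le c_A$, whence $\kappa_{\text{erg}} \le c_A$.

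For the reverse inequality, I would build an optimal transshipment measure by passing to the limit from positive temperature. At each finite $\beta$, $\mathbf y \mapsto \mu_{\mathbf y, \beta A}$ is weak*-continuous (by Ruelle--Perron--Frobenius applied to the Lipschitz observable $\beta A(\mathbf y, \cdot) + \phi_\beta^+$, which depends continuously on $\mathbf y$), so the affine map $T_\beta : \nu \mapsto \int \mu_{\mathbf y, \beta A}\, d\nu(\mathbf y)$ is a continuous self-map of the weak*-compact convex set $\mathcal M(\Sigma_{\mathbf M}^*)$, and Markov--Kakutani produces a fixed point $\nu_\beta$. The measure $\hat\eta_\beta(d\mathbf y, d\mathbf x) := \nu_\beta(d\mathbf y)\, \mu_{\mathbf y, \beta A}(d\mathbf x)$ is then a transshipment measure by construction. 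Dividing the identity $\phi_\beta^+(\mathbf y) + \lambda_\beta^+ = \int[\beta A(\mathbf y, \cdot) + \phi_\beta^+]\, d\mu_{\mathbf y, \beta A} + h_{\mu_{\mathbf y, \beta A}}(\sigma)$ by $\beta$ and integrating against $\nu_\beta$, the fixed-point relation cancels the $\phi_\beta^+$ terms and leaves
$$ \frac{\lambda_\beta^+}{\beta} = \iint A\, d\hat\eta_\beta + \frac{1}{\beta} \int h_{\mu_{\mathbf y, \beta A}}(\sigma)\, d\nu_\beta(\mathbf y). $$
Extracting a weak*-convergent subsequence $\hat\eta_{\beta_n} \to \hat\eta$, continuity of the canonical projections preserves the transshipment condition, continuity of $A$ gives $\iint A\, d\hat\eta_{\beta_n} \to \iint A\, d\hat\eta$, and the entropy term is bounded by $(\log r)/\beta_n \to 0$; hence $\iint A\, d\hat\eta = c_A$.

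To upgrade from $A$ to $\mathcal C$, I would observe that each $\mu_{\mathbf y, \beta A}$ is $\sigma$-invariant, so each $\hat\eta_\beta$ is invariant under the partial shift $\mathrm{id} \times \sigma$ acting on $\hat\Sigma_{\mathbf M}$, and this invariance is preserved under the weak*-limit. Disintegrating $\hat\eta = \int \rho_{\mathbf y}\, d(\pi^*)_*\hat\eta(\mathbf y)$, the conditional measure $\rho_{\mathbf y}$ is $\sigma$-invariant for $(\pi^*)_*\hat\eta$-almost every $\mathbf y$; combined with the uniform boundedness of the selected family $\{U_{\mathbf y}\}_{\mathbf y}$, this gives $\int [U_{\mathbf y}(\mathbf x) - U_{\mathbf y}(\sigma(\mathbf x))]\, d\rho_{\mathbf y}(\mathbf x) = 0$ a.e., so the coboundary contribution vanishes and $\iint \mathcal C\, d\hat\eta = \iint A\, d\hat\eta = c_A$. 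The hardest step is precisely this last one: one cannot pass $\iint \mathcal C\, d\hat\eta_\beta$ through the weak*-limit directly because $\mathcal C$ is not continuous, and it is the fiberwise $\sigma$-invariance inherited from the equilibrium states, rather than any continuity of $U_{\mathbf y}$ in $\mathbf y$, that saves the coboundary term.
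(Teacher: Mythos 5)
Your argument is correct, but the hard half runs along a genuinely different route from the paper's. The inequality $\kappa_{\text{erg}}\le c_A$ is handled exactly as in the paper: integrate \eqref{subacao} against an admissible $\hat\eta$ and use equality of marginals to kill the $V$-terms. For $c_A\le\kappa_{\text{erg}}$, however, the paper constructs no limiting plan at all: it upgrades \eqref{subacao} to the additive-eigenvalue equation \eqref{Vautad}, invokes the max-plus characterization of the eigenvalue (theorem 2.1 of \cite{Ba}) to write $c_A$ as a supremum of cyclic averages of $\mathcal C$, and observes that each cyclic empirical measure $\hat\eta_k=\frac1k\sum_i\delta_{(\mathbf z^i,\mathbf z^{i+1})}$ automatically has equal marginals. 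You instead produce a transshipment measure directly as a zero-temperature limit: a Markov--Kakutani fixed point $\nu_\beta$ of $\nu\mapsto\int\mu_{\mathbf y,\beta A}\,d\nu$ makes $\hat\eta_\beta=\nu_\beta(d\mathbf y)\,\mu_{\mathbf y,\beta A}(d\mathbf x)$ admissible, the fixed-point property cancels the $\phi_\beta^+$ terms in the integrated pressure identity, the entropy term is $O(1/\beta)$, and the coboundary part of $\mathcal C$ vanishes in the limit because the disintegration of the $(\mathrm{id}\times\sigma)$-invariant limit plan has $\sigma$-invariant conditionals. This buys two things the paper's argument does not give: an optimizing (not merely approximating) plan, and independence from the external eigenvalue result, whose application to a cost that is only bounded Borel is precisely the delicate point the paper glosses over; your disintegration step confronts the measurability of $\{U_{\mathbf y}\}_{\mathbf y}$ head-on and uses only the inequality in \eqref{subacao}, never the equality on the support of $\mu_{\mathbf y}^\infty$. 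Two small repairs are needed. First, the weak* continuity of $\mathbf y\mapsto\mu_{\mathbf y,\beta A}$ should not be attributed to continuity of $\mathbf y\mapsto\beta A(\mathbf y,\cdot)+\phi_\beta^+$ into $\mathcal H$, which can fail for a Lipschitz $A$ on the product; argue instead via uniform Lipschitz bounds together with $C^0$ convergence, upper semicontinuity of entropy, and uniqueness of equilibrium states. Second, since each $\mu_{\mathbf y,\beta A}$ has full support, $\hat\eta_\beta$ and its limit charge pairs with $\mathbf M(y_0,x_0)=0$, so strictly speaking they live on $\Sigma^*_{\mathbf M}\times\Sigma_{\mathbf M}$ rather than on $\hat\Sigma_{\mathbf M}$; this is harmless under the paper's footnote convention (the cost being $A\,\mathbf M$ plus a fiberwise coboundary, and the paper's own cyclic measures share the feature), but it deserves an explicit sentence.
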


\begin{proof}
We remark that inequality~\eqref{subacao} implies that $ \kappa_{\text{erg}} \le c_A $. Indeed,
given any Borel probability  $ \hat{\eta}$ on $ \hat{\Sigma}_{\mathbf M} $ such that $ (\pi)_*  (\hat{\eta} ) = (\pi^*)_*  (\hat{\eta} ) $, one clearly has
\begin{multline*}
\iint_{\hat \Sigma_{\mathbf M}} [\,A(\mathbf y,\mathbf x) - U_{\mathbf y} (\mathbf x) - U_{\mathbf y} (\sigma(\mathbf x))\,]\, d\hat{\eta}(\mathbf y,\mathbf x) = \\
=\iint_{\hat \Sigma_{\mathbf M}} [\,A(\mathbf y,\mathbf x) - U_{\mathbf y} (\mathbf x) - U_{\mathbf y} (\sigma(\mathbf x)) + V(\mathbf x) - V(\mathbf y)\,]\, d\hat{\eta}(\mathbf y,\mathbf x) \le c_A.
\end{multline*}

Recall that functional equation~\eqref{Vautad} shows the limit $V$ is an additive eigenfunction and the constant $c_A$ is an additive eigenvalue for $\mathcal C$. Actually, since $ \mathcal C $ is bounded,  it is easy to obtain that $ c_A $ is uniquely determined by
$$ c_A = \sup_{\{\mathbf z^k\}_{k \ge 1}} \, \limsup_{k \to \infty} \, \frac{\mathcal C(\mathbf z^1, \mathbf z^2) + \mathcal C(\mathbf z^2, \mathbf z^3) + \cdots + \mathcal C(\mathbf z^k, \mathbf z^{1})}{k}, $$
where the supremum is taken among sequences $ \{\mathbf z^k\}  $ of points of $ \Sigma_{\mathbf M}  \simeq  \Sigma_{\mathbf M}^* $.  See theorem 2.1 in \cite{Ba} for a general result. Notice now that
$$ \frac{\mathcal C(\mathbf z^1, \mathbf z^2) + \mathcal C(\mathbf z^2, \mathbf z^3) + \cdots + \mathcal C(\mathbf z^k, \mathbf z^{1})}{k} =  \iint_{\hat \Sigma_{\mathbf M}} \mathcal C(\mathbf y,\mathbf x) \, d\hat{\eta}_k(\mathbf y,\mathbf x), $$
where $ \hat{\eta}_k $ is the Borel probability on $ \hat \Sigma_{\mathbf M} $ defined by
$$  \hat{\eta}_k := \frac{1}{k} \delta_{(\mathbf z^1, \mathbf z^2)} + \frac{1}{k} \delta_{(\mathbf z^2, \mathbf z^3)} + \ldots + \frac{1}{k} \delta_{(\mathbf z^k, \mathbf z^{1})}. $$
Since  $  (\pi)_*  (\hat{\eta}_k ) = (\pi^*)_*  (\hat{\eta}_k ) $ for all $ k \ge 1 $, it obviously follows that $ c_A \le \kappa_{erg} $.
\end{proof}

\end{section}

\begin{section}{Contraction properties of $ \mathcal G^+ $}

We would like to discuss now the proof of Theorem~\ref{principal}.
We start pointing out an immediate contraction property of $ \mathcal G^+ $.

\begin{proposition}
For all $ \phi, \psi \in \mathcal H $,
$$ \left \| \mathcal G^+(\phi) - \mathcal G^+(\psi) \right \|_0 \le \left \| \phi - \psi \right \|_0. $$
\end{proposition}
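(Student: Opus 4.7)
The plan is to exploit the fact that $\mathcal{G}^+$ is defined as a supremum of a family of affine functionals indexed by the same set $\mathcal{M}_\sigma$ for both $\phi$ and $\psi$; perturbing the integrand by $\phi-\psi$ perturbs each functional in this family by at most $\|\phi-\psi\|_0$ in absolute value, so the supremum moves by at most the same amount.

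Concretely, I would fix an arbitrary point $\mathbf{y}^* \in \Sigma^*_{\mathbf{M}}$. By the Ruelle-Perron-Frobenius theorem recalled in the introduction, the supremum defining $\mathcal{G}^+(\phi)(\mathbf{y}^*) = P_{_{TOP}}(A(\mathbf{y}^*,\cdot)+\phi)$ is attained by the equilibrium state $\mu_{\mathbf{y}^*,\phi} \in \mathcal{M}_\sigma$. Then I would simply write
$$\mathcal{G}^+(\phi)(\mathbf{y}^*) = \int \bigl(A(\mathbf{y}^*,\cdot)+\phi\bigr) \, d\mu_{\mathbf{y}^*,\phi} + h_{\mu_{\mathbf{y}^*,\phi}}(\sigma),$$
and use the fact that this same measure is a (not necessarily optimal) candidate in the variational problem defining $\mathcal{G}^+(\psi)(\mathbf{y}^*)$, yielding
$$\mathcal{G}^+(\psi)(\mathbf{y}^*) \ge \int \bigl(A(\mathbf{y}^*,\cdot)+\psi\bigr) \, d\mu_{\mathbf{y}^*,\phi} + h_{\mu_{\mathbf{y}^*,\phi}}(\sigma).$$
Subtracting gives $\mathcal{G}^+(\phi)(\mathbf{y}^*) - \mathcal{G}^+(\psi)(\mathbf{y}^*) \le \int (\phi-\psi) \, d\mu_{\mathbf{y}^*,\phi} \le \|\phi-\psi\|_0$.

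Interchanging the roles of $\phi$ and $\psi$ yields the reverse inequality, so $\bigl|\mathcal{G}^+(\phi)(\mathbf{y}^*) - \mathcal{G}^+(\psi)(\mathbf{y}^*)\bigr| \le \|\phi-\psi\|_0$ for every $\mathbf{y}^*$, and taking the supremum over $\mathbf{y}^*$ gives the claimed bound. I do not anticipate any real obstacle: the argument is the standard ``sup of a uniformly perturbed family'' estimate, and one does not even need existence of a maximizer --- one could instead pick a near-maximizer within $\varepsilon$ and let $\varepsilon \to 0$, which bypasses the Ruelle-Perron-Frobenius invocation if desired. The only mild subtlety is that one must use the \emph{same} measure on both sides, which is precisely why the symmetric cancellation of the entropy term (and of $A(\mathbf{y}^*,\cdot)$) occurs.
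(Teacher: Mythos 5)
Your argument is correct and coincides with the paper's own proof: fix $\mathbf{y}$, take the equilibrium state attaining the supremum for $\phi$, use it as a competitor for $\psi$, bound the difference by $\|\phi-\psi\|_0$, and symmetrize. No difference in substance (the paper likewise invokes the maximizing measure rather than an $\varepsilon$-maximizer).
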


\begin{proof}
Given $ \mathbf y \in \Sigma_{\mathbf M}^* $, take $ \mu_{\mathbf y} \in \mathcal M_\sigma $ satisfying
$$ \mathcal G^+(\phi)(\mathbf y) = \int_{\Sigma_{\mathbf M}} \left( A(\mathbf y, \mathbf x) + \phi(\mathbf x)\right) \; d\mu_{\mathbf y}(\mathbf x) +
h_{\mu_{\mathbf y}}(\sigma). $$
Obviously
$ \mathcal G^+(\psi)(\mathbf y) \ge \int_{\Sigma_{\mathbf M}} \left( A(\mathbf y, \mathbf x) + \psi(\mathbf x)\right) \; d\mu_{\mathbf y}(\mathbf x) +
h_{\mu_{\mathbf y}}(\sigma) $.
Therefore, we have
$$ \mathcal G^+(\phi)(\mathbf y) - \mathcal G^+(\psi)(\mathbf y) \le
\int_{\Sigma_{\mathbf M}} \left( \phi(\mathbf x) - \psi(\mathbf x) \right) \; d\mu_{\mathbf y}(\mathbf x) \le
\| \phi - \psi \|_0. $$
Since $ \phi $ and $ \psi $ play symmetrical roles, the proof is complete.
\end{proof}

Notice that, for any real number $ \gamma $, we have $ \mathcal G^+(\phi + \gamma) = \mathcal G^+(\phi) + \gamma $.
Let us now identify all functions belonging $ \mathcal H $ which are equal up to an additive constant.
So if we introduce the norm
$$ \| \phi \|_c := \inf_{\gamma \in \mathbb R} \| \phi + \gamma \|_0 $$
for each equivalence class $ \phi \in \mathcal H/\text{constants} $, a fine contraction property can be verified.

\begin{theorem}\label{Gomes-Valdinoci contraction}
Consider $ \phi, \psi \in \mathcal H $ satisfying $ \text{Lip}(\phi), \text{Lip}(\psi) \le K $ for some fixed constant $ K > 0 $.
Then, there exist constants $ C = C(K) > 0 $ and $ \alpha = \alpha(K) > 0 $ such that
$$ \| \mathcal G^+(\phi) - \mathcal G^+(\psi) \|_c \le
\left(1 - C\| \phi - \psi \|_c^\alpha \right) \| \phi - \psi \|_c $$
\end{theorem}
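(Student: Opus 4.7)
The plan is to sharpen the nonstrict contraction $ \| \mathcal G^+(\phi) - \mathcal G^+(\psi) \|_0 \le \| \phi - \psi \|_0 $ of the previous proposition into a strict one in the quotient norm, by combining the invariance $ \mathcal G^+(\phi + \gamma) = \mathcal G^+(\phi) + \gamma $ with the uniform Gibbs property of the measures $ \mu^{\phi}_{\mathbf y} $ that realize the supremum in the definition of $ \mathcal G^+(\phi)(\mathbf y) $.

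First I would normalize. Since adding constants to $ \phi $ and $ \psi $ changes neither $ \| \phi - \psi \|_c $ nor $ \| \mathcal G^+(\phi) - \mathcal G^+(\psi) \|_c $, and since $ \mathrm{diam}(\Sigma_{\mathbf M}) \le 1 $, I may assume $ \phi(\mathbf y^0) = 0 $ (so $ \| \phi \|_0 \le K $) and $ \max(\phi - \psi) = \epsilon = -\min(\phi - \psi) $, where $ \epsilon := \| \phi - \psi \|_c $; in particular $ \| \phi - \psi \|_0 = \epsilon \le K $ and $ \| \psi \|_0 \le 2K $, so that $ \| \phi \|_{\mathcal H}, \| \psi \|_{\mathcal H} $ are controlled by a constant $ N = N(K) $.

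Second, I would invoke the uniform Gibbs estimate. For each $ \mathbf y \in \Sigma^*_{\mathbf M} $, $ \mu^{\phi}_{\mathbf y} $ is the equilibrium state of the Lipschitz potential $ A(\mathbf y, \cdot) + \phi $, whose $ \mathcal H $-norm is uniformly bounded by $ N + \| A \|_{\mathcal H} $. By Bowen's Gibbs property \cite{PP}, the associated Gibbs constants depend only on this uniform bound, on $ \Lambda $, and on the alphabet size $ r $. Hence there exist $ C_1, C_2 > 0 $ depending only on $ K $ and $ A $ such that
$$ \mu^{\phi}_{\mathbf y}([z_0, \ldots, z_{n-1}]) \ge C_1 e^{-C_2 n} $$
for every admissible cylinder and every $ \mathbf y $.

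Third, I would localize around a minimum point. Let $ \mathbf x_- \in \Sigma_{\mathbf M} $ with $ (\phi - \psi)(\mathbf x_-) = -\epsilon $. Since $ \phi - \psi $ is $ 2K $-Lipschitz and $ \mathrm{diam}([\mathbf x_-]_n) \le \Lambda^n $, one has $ \phi - \psi \le -\epsilon + 2K \Lambda^n $ on $ [\mathbf x_-]_n $. Choosing $ n := \lceil \log(4K/\epsilon) / \log(1/\Lambda) \rceil $ gives $ \phi - \psi \le -\epsilon/2 $ on $ [\mathbf x_-]_n $, while the Gibbs bound gives $ \mu^{\phi}_{\mathbf y}([\mathbf x_-]_n) \ge C_3 \epsilon^{\alpha} $ with $ \alpha := C_2 / \log(1/\Lambda) $ and $ C_3 = C_3(K, A) > 0 $. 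Using the sup-attaining inequality from the previous proposition and splitting the integral,
$$ (\mathcal G^+(\phi) - \mathcal G^+(\psi))(\mathbf y) \le \int (\phi - \psi) \, d\mu^{\phi}_{\mathbf y} \le -\tfrac{\epsilon}{2} \mu^{\phi}_{\mathbf y}([\mathbf x_-]_n) + \epsilon \bigl(1 - \mu^{\phi}_{\mathbf y}([\mathbf x_-]_n)\bigr) \le \epsilon \bigl(1 - C \epsilon^{\alpha}\bigr), $$
with $ C := 3 C_3 / 2 $. A symmetric argument using a point where $ \psi - \phi = -\epsilon $ and the measure $ \mu^{\psi}_{\mathbf y} $ yields the same bound for $ (\mathcal G^+(\psi) - \mathcal G^+(\phi))(\mathbf y) $, so
$$ \| \mathcal G^+(\phi) - \mathcal G^+(\psi) \|_c \le \| \mathcal G^+(\phi) - \mathcal G^+(\psi) \|_0 \le \bigl(1 - C \| \phi - \psi \|_c^{\alpha}\bigr) \| \phi - \psi \|_c. $$

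The main obstacle is the second step, namely the \emph{uniform} Gibbs lower bound: one must check that the Bowen-Gibbs constants for an equilibrium state depend only on the Lipschitz and sup norms of the underlying potential (and on the shift data), not on more delicate spectral information. It is exactly the preliminary normalization $ \phi(\mathbf y^0) = 0 $ that makes this uniformity available; once this is in hand, the rest of the argument reduces to a routine interplay between the Lipschitz modulus of $ \phi - \psi $ and the exponential decay of the Gibbs lower bound on cylinders.
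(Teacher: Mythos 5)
Your proposal is correct and follows essentially the same route as the paper's proof: bound $(\mathcal G^+(\phi)-\mathcal G^+(\psi))(\mathbf y)$ by $\int(\phi-\psi)\,d\mu^{\phi}_{\mathbf y}$ via the optimizing equilibrium state, localize on a ball/cylinder of radius comparable to $\|\phi-\psi\|_c/K$ around a minimum point of $\phi-\psi$, and invoke a uniform Gibbs-type lower bound $\ge \mathrm{const}\cdot\|\phi-\psi\|_c^{\alpha}$ for that set. The only difference is bookkeeping: the paper isolates this last step in a lemma whose constants depend only on the Lipschitz seminorm (since the Gibbs ratio involves $\Psi-P_{_{TOP}}(\Psi)$, which is unchanged by adding constants), whereas you secure the same uniformity by the harmless normalization $\phi(\mathbf y^0)=0$ bounding the sup norm.
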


We will need the following lemma.

\begin{lemma}\label{regularidade dimensao}
Let $ A : \hat \Sigma_{\mathbf M} \to \mathbb R $ be Lipschitz continuous observable. Suppose $ \phi \in \mathcal H $
satisfies $ \text{Lip}(\phi) \le K $ for a constant $ K > 0 $.
Given a point $ \mathbf y \in \Sigma^*_{\mathbf M} $, let $ \mu_{\mathbf y} \in \mathcal M_\sigma $ be
the equilibrium state associated to $ A(\mathbf y, \cdot) + \phi \in \mathcal H $. Then there exist constants
$ \Gamma = \Gamma(K) > 0 $ and $ \alpha = \alpha(K) > 0 $ such that, if $ B_\rho \subset \Sigma_{\mathbf M} $
denotes an arbitrary ball of radius $ \rho > 0 $,
$$ \mu_{\mathbf y}(B_\rho) \ge \Gamma \rho^{\alpha}. $$
\end{lemma}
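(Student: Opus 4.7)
The strategy is to compare a ball with an appropriate cylinder and then apply the classical Gibbs estimate, ensuring that every constant depends only on $K$. Since the equilibrium state of $A(\mathbf y,\cdot)+\phi$ is unchanged by adding a constant to $\phi$, I first replace $\phi$ by $\phi-\phi(\mathbf y^0)$ for a fixed $\mathbf y^0 \in \Sigma_{\mathbf M}$; this gives $\|\phi\|_0 \le K \operatorname{diam}(\Sigma_{\mathbf M})$. Setting $\psi_{\mathbf y} := A(\mathbf y, \cdot) + \phi$, I obtain uniform (in $\mathbf y$) bounds $\|\psi_{\mathbf y}\|_0 \le M$ and $\text{Lip}(\psi_{\mathbf y}) \le L$, where $M, L$ depend only on $K$, $\|A\|_0$ and $\text{Lip}(A)$. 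The problem thus reduces to a lower bound on $\mu_{\mathbf y}(B_\rho)$ that is uniform over the family of potentials $\{\psi_{\mathbf y}\}_{\mathbf y}$ lying in a fixed bounded set of $\mathcal H$.

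Next, I invoke the standard lower Gibbs bound furnished by the Ruelle--Perron--Frobenius theorem: there exists $c_0 > 0$ such that, for every $n \ge 0$, every admissible word $(x_0,\ldots,x_n)$ and every $\mathbf x \in [x_0,\ldots,x_n]$,
$$\mu_{\mathbf y}([x_0,\ldots,x_n]) \ge c_0 \exp\bigl(S_n \psi_{\mathbf y}(\mathbf x) - n P_{_{TOP}}(\psi_{\mathbf y})\bigr),$$
where $c_0$ depends only on the distortion of Birkhoff sums of $\psi_{\mathbf y}$ across $n$-cylinders. A telescoping computation shows that if $\mathbf x, \mathbf x'$ share the first $n+1$ coordinates, then
$$\bigl|S_n \psi_{\mathbf y}(\mathbf x) - S_n \psi_{\mathbf y}(\mathbf x')\bigr| \le L \sum_{j=0}^{n-1} \Lambda^{\,n+1-j} \le \frac{L \Lambda}{1-\Lambda},$$
so the distortion, hence $c_0$, can be chosen independently of $\mathbf y$ and $\phi$. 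Combining this with the trivial bounds $S_n \psi_{\mathbf y}(\mathbf x) \ge -nM$ and $P_{_{TOP}}(\psi_{\mathbf y}) \le \log r + M$ gives
$$\mu_{\mathbf y}([x_0,\ldots,x_n]) \ge c_0\, \tau^n, \qquad \tau := e^{-(2M + \log r)} \in (0,1),$$
with $\tau$ again depending only on $K$.

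Finally, I translate cylinders into balls using the ultrametric structure: a ball $B_\rho(\mathbf x) \subset \Sigma_{\mathbf M}$ contains the cylinder $[x_0,\ldots,x_n]$ as soon as $\Lambda^n \le \rho$. Choosing $n := \lceil \log \rho / \log \Lambda \rceil$ and applying the previous step yields
$$\mu_{\mathbf y}(B_\rho(\mathbf x)) \ge c_0\, \tau^{\,\log \rho / \log \Lambda + 1} = \Gamma\, \rho^{\alpha}, \qquad \alpha := \frac{\log \tau}{\log \Lambda} > 0, \quad \Gamma := c_0 \tau,$$
both depending only on $K$. The only non-routine step in the whole argument is the uniformity in $\mathbf y$ of the distortion constant $c_0$; as indicated, it reduces to the telescoping estimate above, reflecting the well-known fact that the normalization and spectral gap of the Ruelle transfer operator depend quantitatively on the Lipschitz potential only through its Lipschitz seminorm.
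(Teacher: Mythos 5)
Your argument is correct and follows essentially the same route as the paper: a lower Gibbs bound for the equilibrium states of the family $ A(\mathbf y,\cdot)+\phi $ with constants uniform over a bounded set of potentials, crude bounds on the Birkhoff sums and on the pressure, and the identification of balls of radius comparable to $ \Lambda^n $ with cylinders of depth $ n $; the paper merely avoids your additive normalization of $ \phi $ by using the oscillation bound $ \Psi - P_{_{TOP}}(\Psi) \ge -\text{Lip}(\Psi) - h_{_{TOP}}(\sigma) $, so its constants involve only the Lipschitz seminorm. The one caveat, at the same level of detail as the paper's own appeal to \cite{PP}, is that the lower Gibbs constant $ c_0 $ depends not only on the distortion of Birkhoff sums but also on the irreducibility data of $ \mathbf M $ (the paper's $ I_{\mathbf M} $) and on the potential's oscillation over the connecting blocks --- all of which are uniformly controlled after your normalization, so the conclusion is unaffected.
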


\begin{proof}
Let $ \mu_{\Psi} \in \mathcal M_\sigma $ be the equilibrium measure associated to $ \Psi \in \mathcal H $.
It is well known that $ \mu_{\Psi} $ is a Gibbs state. As a matter of fact, if $ \mathbf x $ is a point belonging to a
ball $ B_{\Lambda^n} $ of radius $ {\Lambda^n} $, from the very proof of the Gibbs property one can obtain
\begin{multline*}
\exp\left[ - \text{Lip}(\Psi) R(\Lambda) - I_{\mathbf M} ( \text{Lip}(\Psi) + h_{_{TOP}}(\sigma) ) S(\Lambda) \right] \le \\
\le \frac{\mu_{\Psi}(B_{\Lambda^n})}{{\exp\left[\sum_{j=0}^{n - 1} (\Psi - P_{_{TOP}}(\Psi))\circ\sigma^j(\mathbf x)\right]}},
\end{multline*}
where $ R $ and $ S $ are rational functions with $ R(0,1), S(0,1) \subset (0, +\infty) $, $ I_{\mathbf M} $ is a positive integer
depending only on the irreducible transition matrix $ \mathbf M $ and  $ h_{_{TOP}}(\sigma) $ denotes the topological entropy.
For details we refer the reader to \cite{PP}.

From the variational principle, one has $  \Psi - P_{_{TOP}}(\Psi) \ge - \text{Lip}{\Psi} - h_{_{TOP}}(\sigma) $.
Therefore, we immediately get
$$ \exp\left[ - \text{Lip}(\Psi) R(\Lambda) - ( \text{Lip}(\Psi) + h_{_{TOP}}(\sigma) ) (I_{\mathbf M}  S(\Lambda) + n) \right]
\le \mu_{\Psi}(B_{\Lambda^n}). $$

Thus, applying this inequality to $ \Psi = A(\mathbf y, \cdot) + \phi $, it is straightforward to verify that
$$ \Gamma(K) \Lambda^{n \alpha(K)} \le \mu_{\mathbf y}(B_{\Lambda^n}), $$
where
$$ \alpha(K):= \frac{Lip(A) + K + h_{_{TOP}}(\sigma)}{\log \Lambda^{-1}} \quad \text{and} $$
$$ \Gamma(K):= \exp\left[ -(\text{Lip}(A) + K)R(\Lambda) - I_{\mathbf M}(\text{Lip}(A) + K + h_{_{TOP}}(\sigma)) S(\Lambda) \right]. $$
\end{proof}

\begin{proof}[Proof of Theorem~\ref{Gomes-Valdinoci contraction}]
Obviously, for  $ \phi \in \mathcal H $ and $ \gamma \in \mathbb R $, we have $ \| \phi + \gamma \|_c = \| \phi \|_c. $
Moreover, given $ \phi, \psi \in \mathcal H $, there exists $ \overline \gamma \in \mathbb R $ such that
$ \| \phi - \psi \|_c = \| \phi - \psi + \overline \gamma \|_0 $.

As $ \mathcal G $ commutes with constants, replacing $ \psi $ by $ \psi - \min \psi $ and
$ \phi $ by $ \phi + \overline \gamma - \min \psi $, without loss of generality, we may assume
$$ \min \psi = 0 \;\; \text{ and } \;\; \| \phi - \psi \|_c = \| \phi - \psi \|_0. $$
We suppose yet $ \phi \neq \psi $, since otherwise there is nothing to argue.

Take then $ \mathbf y \in \Sigma_{\mathbf M}^* $ satisfying
$$ \| \mathcal G^+(\phi) - \mathcal G^+(\psi) \|_0 = | \mathcal G^+(\phi)(\mathbf y) - \mathcal G^+(\psi)(\mathbf y) |. $$
By interchanging the roles of $ \phi $ and $ \psi $ if necessary, we suppose that
$$ | \mathcal G^+(\phi)(\mathbf y) - \mathcal G^+(\psi)(\mathbf y) | = \mathcal G^+(\phi)(\mathbf y) - \mathcal G^+(\psi)(\mathbf y). $$

Since $ \min \psi = 0 $, taking any point $ \mathbf x \in \Sigma_{\mathbf M} $, we get
$$ \|\phi - \psi\|_c \le \|\phi - \phi(\mathbf x) - \psi\|_0 \le \|\phi - \phi(\mathbf x)\|_0 + \|\psi\|_0 \le \text{Lip}(\phi) + \text{Lip}(\psi) \le 2K. $$

Note that $ \| \phi - \psi \|_c = \| \phi - \psi \|_0 $ implies $ \min(\phi - \psi) = - \max(\phi - \psi) $. In particular,
$ \min(\phi - \psi) = - \| \phi - \psi \|_c $. So there exists a point $ \overline{\mathbf x} \in \Sigma_{\mathbf M} $ such that
$$ (\phi - \psi) (\overline{\mathbf x}) = - \| \phi - \psi \|_c < 0. $$

Hence, when $ \mathbf x \in \Sigma_{\mathbf M} $ verifies $ d(\mathbf x, \overline{\mathbf x}) \le \frac{\|\phi - \psi\|_c}{4K} $, we obtain
\begin{eqnarray}
\phi(\mathbf x) - \psi(\mathbf x)
& \le & |\phi(\mathbf x) - \phi(\overline{\mathbf x})| + |\psi(\overline{\mathbf x}) - \psi(\mathbf x)| + (\phi - \psi)(\overline{\mathbf x}) \nonumber \\
& \le & 2K  \frac{\|\phi - \psi\|_c}{4K} - \| \phi - \psi \|_c \nonumber \\
& = & - \frac{\| \phi - \psi \|_c}{2}< 0. \label{valores negativos}
\end{eqnarray}

Let then $ \mu_{\mathbf y} \in \mathcal M_\sigma $ be such that
$$ \mathcal G^+(\phi)(\mathbf y) = \int_{\Sigma_{\mathbf M}} \left( A(\mathbf y, \mathbf x) + \phi(\mathbf x)\right) \; d\mu_{\mathbf y}(\mathbf x) +
h_{\mu_{\mathbf y}}(\sigma). $$
As in the previous proposition, it follows
$$ \mathcal G^+(\phi)(\mathbf y) - \mathcal G^+(\psi)(\mathbf y) \le
\int_{\Sigma_{\mathbf M}} \left( \phi(\mathbf x) - \psi(\mathbf x) \right) \; d\mu_{\mathbf y}(\mathbf x). $$

So if $ B_{\frac{\|\phi - \psi\|_c}{4K}}(\overline{\mathbf x}) $ denotes the closed ball of radius $ \frac{\|\phi - \psi\|_c}{4K} \in (0, 1) $ and center
$ \overline{\mathbf x} \in \Sigma_{\mathbf M} $, from~\eqref{valores negativos} and lemma~\ref{regularidade dimensao}, we verify
\begin{eqnarray*}
\mathcal G^+(\phi)(\mathbf y) - \mathcal G^+(\psi)(\mathbf y) & \le &
\int_{\Sigma_{\mathbf M} - B_{\frac{\|\phi - \psi\|_c}{4K}}(\overline{\mathbf x})} \left(\phi(\mathbf x) - \psi(\mathbf x)\right)\;d\mu_{\mathbf y}(\mathbf x) \\
& \le & \|\phi - \psi\|_0 \left( 1 - \mu_{\mathbf y}\left(B_{\frac{\|\phi - \psi\|_c}{4K}}(\overline{\mathbf x})\right)\right) \\
& \le & \|\phi - \psi\|_0 \left( 1 - C\|\phi - \psi\|_c^\alpha\right),
\end{eqnarray*}
where $ C := \Gamma/(4K)^\alpha > 0 $.

As $ \| \mathcal G^+(\phi) - \mathcal G^+(\psi) \|_c \le \| \mathcal G^+(\phi) - \mathcal G^+(\psi) \|_0 =
\mathcal G^+(\phi)(\mathbf y) - \mathcal G^+(\psi)(\mathbf y) $, the proof is complete.
\end{proof}

Theorem~\ref{principal} is then a direct consequence of Theorem~\ref{Gomes-Valdinoci contraction}, the fact that
$$ \text{Lip}(\mathcal G^+ (\phi)) \le \| A \|_0 + \text{Lip}(A) \quad \forall \; \phi \in \mathcal H, $$
and the following fixed point theorem due to D. A. Gomes and E. Valdinoci (for a proof, see Appendix A of \cite{GV}).

\begin{theoremA}
Let $ \mathbf F $ be a closed subset of a Banach space, endowed with a norm $ \| \cdot \| $.
Suppose that $ G : \mathbf F \to \mathbf F $ is so that
$$ \| G(\phi) - G(\psi) \| \le \left(1 - C\| \phi - \psi \|^\alpha \right) \| \phi - \psi \|, $$
for all $ \phi, \psi \in \mathbf F $ and some given constants $ C, \alpha > 0 $. Then there exists a unique
$ \phi^+ \in \mathbf F $ such that $ G(\phi^+) = \phi^+ $. Moreover, given any $ \phi_0 \in \mathbf F $,
we have
$$ \phi^+ = \lim_{n \to +\infty} G^n(\phi_0). $$
\end{theoremA}

\end{section}

\footnotesize{

}


\begin{thebibliography}{99}
\bibitem{Ba}
N. Bacaer, Convergence of numerical methods and parameter dependence of min-plus eigenvalue problems, Frenkel-Kontorova models and homogenization of Hamilton-Jacobi equations,
\emph{ESAIM: Mathematical Modelling and Numerical Analysis}
\textbf{ 35} (2001), 1185-1195.

\bibitem{CD}
W. Chou and R. J. Duffin,
An additive eigenvalue problem of physics related to linear programming,
\emph{Advances in Applied Mathematics} \textbf{8} (1987), 486-498.

\bibitem{CG}
W. Chou and R. Griffiths,
Ground states of one-dimensional systems using effective potentials,
\emph{Physical Review B} \textbf{34} (1986), 6219-6234.

\bibitem{Co}
 M. C. Concordel, Periodic homogenization of Hamilton-Jacobi equations: additive eigenvalues and variational formula, \emph{Indiana University Mathematical Journal}  \textbf{45} (1996), 1095-1118.

\bibitem{CLT}
G. Contreras, A. O. Lopes and Ph. Thieullen,
Lyapunov minimizing measures for expanding maps of the circle,
\emph{Ergodic Theory and Dynamical Systems} \textbf{21} (2001), 1379-1409.

\bibitem{CoG} J. P. Conze and Y. Guivarc'h, \emph{Croissance
des sommes ergodiques et principe variationnel}, manuscript, circa 1993.

\bibitem{GLM}
D. A. Gomes, A. O. Lopes and J. Mohr, The Mather measure and a large
deviation principle for the entropy penalized method, to appears in
\emph{Communications in Contemporary Mathematics}.

\bibitem{GV}
D. A. Gomes and E. Valdinoci,
Entropy penalization methods for Hamilton-Jacobi equations,
\emph{Advances in Mathematics} \textbf{215} (2007), 94-152.

\bibitem{Jenkinson}
O. Jenkinson, Ergodic optimization, \emph{Discrete and Continuous Dynamical Systems, Series A}
\textbf{15} (2006), 197-224.

\bibitem{LMST}
A. O. Lopes, J. Mohr, R. Souza and Ph. Thieullen,
Negative entropy, zero temperature and stationary Markov chains on the interval,
\emph{Bulletin of the Brazilian Mathematical Society} \textbf{40} (2009), 1-52.

\bibitem{LOT}
A. O. Lopes, E. R. Oliveira and Ph. Thieullen,
The dual potential, the involution kernel and transport in ergodic optimization,
\emph{preprint}, 2008.

\bibitem{Mol}
I. Molchanov, \emph{Theory of random sets}, Springer-Verlag, London, 2005.

\bibitem{PP}
W. Parry and M. Pollicott,
Zeta functions and the periodic orbit structure of hyperbolic dynamics,
\emph{Ast\'erisque} \textbf{187-188} (1990).

\bibitem{Ra}
S. T. Rachev and L. R\"uschendorf,
\emph{Mass transportation problems -- Volume I: Theory, Volume II: Applications},
Springer-Verlag, New York, 1998.

\bibitem{Sal}
M. Salmhofer, \emph{Renormalization: an introduction}, Springer-Verlag, Berlin, 1999.

\bibitem{Vi1}
C. Villani, \emph{Topics in optimal transportation}, AMS, Providence, 2003.

\bibitem{Vi2}
C. Villani, \emph{Optimal transport: old and new}, Springer-Verlag, Berlin, 2009.
\end{thebibliography}
\end{document}